\newtheorem{defn}{Definition}[section]
\newtheorem{thm}{Theorem}[section]
\newtheorem{cor}{Corollary}[section]
\newtheorem{rmk}{Remark}[section]
\newtheorem{lma}{Lemma}[section]
\newtheorem{exm}{Example}[section]
\def\N{{\rm I\kern-0.16em N}}
\def\R{{\rm I\kern-0.16em R}}
\def\E{{\rm I\kern-0.16em E}}
\def\P{{\rm I\kern-0.16em P}}
\def\F{{\rm I\kern-0.16em F}}
\def\B{{\rm I\kern-0.16em B}}
\def\C{{\rm I\kern-0.46em C}}
\def\G{{\rm I\kern-0.50em G}}
\newcommand{\ud}{\mathrm{d}}
\numberwithin{equation}{section}
\begin{document}

\let\oldthefootnote\thefootnote
\footnotetext[1]{Department of Mathematics and System Analysis, Aalto University School of Science, Helsinki,
P.O. Box 11100, FIN-00076 Aalto, FINLAND, lauri.viitasaari\@ aalto.fi}
\let\thefootnote\oldthefootnote

\let\oldthefootnote\thefootnote
\renewcommand{\thefootnote}{\fnsymbol{footnote}}
\footnotetext[1]{Acknowledgements: The author thanks Esko Valkeila for discussions and comments which improved the paper. 
The author thanks the Finnish Doctoral Programme in Stochastics and Statistics for financial support.}
\let\thefootnote\oldthefootnote

\title[Rate of convergence]
{Rate of convergence for discrete approximation of option prices}

\author[Viitasaari]{Lauri Viitasaari$^{1,*}$}

\doublespacing
\begin{abstract}
In this article, we study the rate of convergence of prices when a model is approximated by some simplified model.
We also provide a method how explicit error formula for more general options can be obtained if such formula is available for 
digital option prices. We illustrate our results by considering convergence of 
binomial prices to Black-Scholes prices. We also consider smooth convergence in which the approximation does not oscillate 
for general class of payoff functions.

\medskip

\noindent
{\it Keywords:} rate of convergence, binomial model, Black-Scholes model

\smallskip

\noindent
{\it 2010 AMS subject classification:} 91G20, 91G60 
\end{abstract}

\maketitle

\section{Introduction}
Fundamental theorem of asset pricing states that arbitrage-free prices of claims can be computed by expectation with respect
to an equivalent martingale measure. However, usually the prices cannot be computed in closed form and one has to use numerical
methods. These numerical methods can roughly be divided into three different categories, which all have efficiencies and drawbacks; tree-methods, Monte Carlo-methods and 
methods based on partial different equations (PDE) or partial integro-differential equations (PIDE). Especially, an upside on tree-methods
is that they are relatively easy to implement. As a drawback, it may happen that even if we have weak convergence for processes, 
the actual prices may not converge. For instance, the convergence of prices in exponential L\'{e}vy models is studied in 
details
by Cawston and Vostrikova \cite{Vostrikova}.

In this paper, we consider the convergence of prices of some sequence of simplified models. We introduce a 
notion of uniform integrability with respect to a bond-payoff pair $(B^n,f)$. Using this notion we show that if the 
prices for digital options converge, then the prices for a wide class of options converge if and only if the family of processes
is uniformly integrable with respect to a pair $(B^n,f)$ in given sequence of martingale measures $Q_n$. We also give more relaxed, but 
only sufficient conditions under which the prices converge.  

Under some additional conditions we also obtain exact error expansion for prices if we have such expansion 
for digital option prices. Moreover, this error expansion is relatively easy to compute from error expansion for digital 
options. As a benefit, we obtain an easily implemented method for pricing complicated claims. For these result we
use a pricing formulas derived by Viitasaari \cite{viitasaari} which can be applied for values of options at some time $t$, not only the
prices i.e. the values at time $t=0$. Hence, depending on the model and the corresponding approximation, it is possible
to construct approximations that converge faster to true values compared to some PDE-methods and are simpler to implement.

\subsection{Convergence of prices in binomial tree approximations}
We illustrate how to obtain error expansion for prices of general options by considering binomial tree approximation for
Black-Scholes prices. In a binomial model with $n$ time periods, the stock price $S$ either rises to $Su$ or falls to $Sd$ at each time period. 
With different choices of $u$ and $d$ one gets different kind of binomial models. 

It is well known that the binomial option prices converges to the Black-Scholes option prices as the number $n$ of time 
periods increase to infinity. The first proofs to this result were given by Cox et al. \cite{Cox} and Rendleman and 
Bartter \cite{Rendleman}. A more general proof was afterwards provided by Hsia \cite{Hsia}. 

Later the focus has been on the rate of convergence. Heston and Zhou \cite{Heston} worked with 
Cox, Ross, and Rubinstein (CRR) model, where $u=e^{\sigma\sqrt{\Delta t}}$ and $d=1/u$ with
$\sigma$ corresponding to the volatility of the stock in Black-Scholes model and $\Delta t=\frac{T}{n}$. They showed that 
the error is of order $O\left(\frac{1}{\sqrt{n}}\right)$ for general class of options. However, for European call options 
the convergence is faster. Leisen and Reimer \cite{Leisen} proved that for the CRR model, the Jarrow and Rudd \cite{Jarrow} 
model and the Tian \cite{Tian} model, the error for European call options is of order $O\left(n^{-1}\right)$. 
However, Leisen and Reimer did not give an explicit formula for the error. 

First two results on the exact formula for the error are due to Francine and Marc Diener \cite{Diener} and Walsh \cite{Walsh}. Francine and Marc Diener worked with the CRR model 
and proved the following result in our notation:

In the $n$-period CRR model, if $S_0=1$, the binomial price $V_{BT(n)}^C(K)$ of European call option with strike $K$ and maturity $T=1$ satisfies
$$
V_{BT(n)}^C(K)=V_{BS}^C(K) + \frac{e^{-\frac{d_1^2}{2}}}{24\sigma\sqrt{2\pi}}\frac{A-12\sigma^2(\Delta_n^2-1)}{n}+O\left(\frac{1}{n\sqrt{n}}\right)
$$
where $V_{BS}^C(K)$ is the corresponding Black-Scholes price, \\
$\Delta_n = 1-2frac\left[\frac{\log\left(\frac{1}{K}\right)+n\log d}{\log u -\log d}\right]$, $A=-\sigma^2(6+d_1^2 + d_2^2)+4(d_1^2-d_2^2)r-12r^2$, and $frac(x)$ denotes the fractional part of $x$.

Walsh considered general class of options. In particular, Walsh considered payoff functions which 
are piecewise $C^2$, regular, and the function and its derivatives are polynomially bounded. He 
chose $u=e^{\sigma\sqrt{\Delta t}+r\Delta t}$ and $d=e^{-\sigma\sqrt{\Delta t}+r\Delta t}$ in a binomial model with 
even number of periods. For call option he obtained
$$
V_{BT(n)}^C(K)=V_{BS}^C(K) + \frac{S_0e^{-\frac{d_1^2}{2}}}{24\sigma\sqrt{2\pi T}}\frac{A-12\sigma^2T(\Delta_n^2-1)}{n}+O\left(\frac{1}{n\sqrt{n}}\right),
$$
where $A$ is constant and 
$$
\Delta_n  = 1-2frac\left[\frac{\log\left(\frac{S_0}{K}\right)+n\log d}{\log u -\log d}\right].
$$

The $\Delta_n$ appearing in these formulas oscillates between $-1$ and $1$, and thus the binomial prices usually oscillate 
around Black-Scholes prices. As a result, it can happen that doubling the number of time steps leads to larger error. The 
smooth convergence, in which case one can improve the rate by standard extrapolation techniques, was studied by Chang and 
Palmer \cite{Palmer}. In particular, they generalised the results of Diener and Diener \cite{Diener} and Walsh \cite{Walsh} 
by considering a generalised class of binomial models with additional parameter $\lambda_n$ appearing in $u$ and $d$. As a 
result, they obtained an explicit formula for the error for digital and call options in their generalised class of binomial 
models, and with suitable choices of $\lambda_n$ they obtained smooth convergence. The result of Chang and Palmer is one of 
the main tools of our analysis, and we introduce their result in section \ref{sec_exact}.

We also consider generalised 
class of binomial models introduced by Chang and Palmer \cite{Palmer} and find an explicit formula for the error for 
general class of options. We consider payoff functions which are piecewise $C^1$ and polynomially 
bounded. Hence, our result covers wider
class of options than the result of Walsh. Moreover, our formula seems to be a simpler one and has a simpler proof. Our 
result is also valid if $n$ is odd number. We also consider smooth convergence and show that if we use a 
modification of binomial approximation 
rather than actual binomial approximation, we obtain smooth convergence for a class of options. 

The rate of convergence can also be improved by constructing the binomial model in suitable way. The problem was studied 
first by Joshi \cite{Joshi} for odd number of time periods and later by Xiao \cite{Xiao} for even number of time periods. 
However, in such cases one needs to consider the underlying option for which one wants to improve the convergence and the 
improved rate does not hold for general options. Combining results of \cite{Xiao} and \cite{Joshi} with our modified 
binomial approximation we can obtain faster convergence. However, we do not go into details since the binomial approximations 
are not the focus of this article.

The rest of the paper is organised as follows. In section \ref{sec_results} we state our main results with discussions. 
In section \ref{sec_exact} we consider exact error expansion for approximation error and apply our results to binomial 
tree approximation for Black-Scholes prices.
Section \ref{sec_proofs} is devoted to actual proofs.

\section{Convergence of prices in simplified model}
\label{sec_results}
\subsection{Auxiliary facts}
\begin{defn}
A market $(M)$ is represented by a six-tuple $(\Omega,\mathcal{F}, B, S, (\mathcal{F}_t),\P)$, where $B$ denotes the bond-process which is increasing, adapted and satisfies $B_0=1$. An underlying asset $X$ on that market is given by a process $X_t$ which is adapted to the filtration $\mathcal{F}_t$ generated by the stock process $S$.
\end{defn}
We assume that for a given model, the set of equivalent martingale measures is not empty and for chosen pricing measure $Q$ the price of a European type option $f(X_T)$ is denoted by $V_Q^f$ and given by
\begin{equation}
\label{price_def}
V_Q^f = \E_Q[B_T^{-1}f(X_T)].
\end{equation}
We denote the price of digital option $\textbf{1}_{X_T>a}$ by $V_Q^D(a)$ and the price of digital option $\textbf{1}_{X_T\geq a}$ by $\tilde{V}_Q^D(a)$. Similarly, we denote by $V_Q^C(a)$ the call price with strike $a$ given by
\begin{equation}
\label{mitta_call2}
V_Q^C(a) = \E_Q[B_T^{-1}(X_T-a)^+].
\end{equation}
We consider the following class of payoff functions:
\begin{defn}
For a function $f:\R_+\rightarrow\R$, we denote $f\in\Pi$ if $f$ is continuously differentiable except on a finite set of points
$0<s_1<s_2<\ldots<s_N$ (and possibly on $s_0=0$) in which $f$ and $f'$ have jump-discontinuities. The jump and the limits from the left at zero is defined as $\Delta_-f(0)=f(0-) =f'(0-)= 0$.
\end{defn}
\begin{rmk}
We could extend our analysis to cover a case when the amount of jumps is countable. However, in this case we would have to deal with infinite series corresponding to jumps and we would 
need some extra conditions related to jumps. Hence, for simplicity, we restrict our analysis to finite number of jumps. We also wish to emphasise that actually we may 
have that $f'$ has countable many discontinuities and still our results are valid exactly in the same form as long as $f$ has finite number of discontinuities.
\end{rmk}
\begin{defn}
For a function $f:\R_+\rightarrow\R$, we denote $f\in\Pi_Q(X_T)$ if the following conditions are satisfied:
\begin{enumerate}
\item
$f\in\Pi$, 
\item
$f(X_T)\in L^1(Q)$,
\item
$f$ satisfies
\begin{equation}
\label{cond1}
\lim_{x\rightarrow\infty}\lvert f(x-)\rvert Q(X_T \geq x) = 0
\end{equation}
and,
\item
the Riemann-Stieltjes integral
$$
\int_0^{\infty}f'(a)V_Q^C(\ud a)
$$
exists.
\end{enumerate}
\end{defn}
One of our main tools are the following pricing formulas. For proofs and details, we refer to \cite{viitasaari}.
\begin{thm}
\label{theor_v_general}
Let $f\in\Pi_Q(X_T)$. 
Then the price $V^f$ of an option $f(X_T)$ is given by
\begin{equation}
 \label{v_general2}
\begin{split}
 V^f &= f(0)\E_Q[B_T^{-1}] + \int_0^{\infty} f'(a)V_Q^D(a)\ud
a\\
&+ \sum_{k=0}^{N}\Delta_-f(s_k)\tilde{V}_Q^D(s_k)\\
&+ \sum_{k=0}^{N}\Delta_+f(s_k)V_Q^D(s_k),
\end{split}
\end{equation}
where 
$\Delta_-f(s_k) = f(s_k)-f(s_k-)$ and $\Delta_+f(s_k)$ is a jump on right side at $s_k$, respectively. 
\end{thm}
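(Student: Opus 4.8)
The plan is to reduce a general payoff $f\in\Pi_Q(X_T)$ to digital payoffs by writing $f$ as a superposition — a continuous part carried by $f'$ together with atoms sitting at the jump points $s_k$ — and then integrating this representation against $B_T^{-1}$ under $Q$ and moving the expectation inside. The substantive first step is the purely real-variable identity, valid for every $x\ge 0$,
\begin{equation}
\label{eq:layercake}
f(x)=f(0)+\int_0^{\infty}f'(a)\,\textbf{1}_{\{a<x\}}\,\ud a+\sum_{k=0}^{N}\Delta_-f(s_k)\,\textbf{1}_{\{x\ge s_k\}}+\sum_{k=0}^{N}\Delta_+f(s_k)\,\textbf{1}_{\{x>s_k\}}.
\end{equation}
To prove \eqref{eq:layercake} one applies the fundamental theorem of calculus on each interval $(s_{k-1},s_k)$ (with the convention $s_0=0$), where $f$ is $C^1$ and $f'$ extends continuously to the endpoints by the assumed one-sided limits, telescopes across the intervals lying to the left of $x$, and finally adds back the jumps that have been crossed. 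The one point needing care is the accounting at a jump: the value $f(s_k)$ is reached from the left as soon as $x\ge s_k$, so $\Delta_-f(s_k)$ is weighted by $\textbf{1}_{\{x\ge s_k\}}$, whereas the right jump $\Delta_+f(s_k)$ is only acquired for $x>s_k$. This asymmetry is exactly what becomes $\tilde V_Q^D$ in the left-jump sum and $V_Q^D$ in the right-jump sum of \eqref{v_general2}.

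The second step is routine. Put $x=X_T$ in \eqref{eq:layercake}, multiply by $B_T^{-1}$ and take $\E_Q$. Because $B$ is increasing with $B_0=1$ we have $0\le B_T^{-1}\le 1$, hence $\E_Q[B_T^{-1}]$ and all the quantities $\tilde V_Q^D(s_k)$, $V_Q^D(s_k)$ are finite; combined with $f(X_T)\in L^1(Q)$ this shows that $B_T^{-1}\int_0^{\infty}f'(a)\textbf{1}_{\{a<X_T\}}\,\ud a$ is $Q$-integrable, and rearranging gives that \eqref{v_general2} is equivalent to the single claim
\[
\E_Q\!\left[B_T^{-1}\!\int_0^{\infty}f'(a)\,\textbf{1}_{\{a<X_T\}}\,\ud a\right]=\int_0^{\infty}f'(a)\,V_Q^D(a)\,\ud a,
\]
it being immaterial whether $V_Q^D$ or $\tilde V_Q^D$ appears on the right, since they differ only on the at most countable set $\{a:Q(X_T=a)>0\}$.

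The last step — interchanging $\E_Q$ with $\int_0^{\infty}\cdot\,\ud a$ — is the real obstacle, and the three integrability conditions in the definition of $\Pi_Q(X_T)$ are tailored precisely to make it go through. On every finite window Fubini is unproblematic: $\int_0^R|f'|\,\ud a<\infty$ (only finitely many $C^1$ pieces on $[0,R]$), so $\E_Q[B_T^{-1}\int_0^R f'(a)\textbf{1}_{\{a<X_T\}}\ud a]=\int_0^R f'(a)V_Q^D(a)\,\ud a$; the whole difficulty is the passage $R\to\infty$. Here I would exploit the absolute-continuity relation $V_Q^C(a)=\int_a^{\infty}V_Q^D(u)\,\ud u$ — itself Tonelli applied to $(X_T-a)^+=\int_a^{\infty}\textbf{1}_{\{X_T>u\}}\ud u$ — which identifies $-V_Q^D$ with the density of $V_Q^C$, so that $\int_0^R f'(a)V_Q^D(a)\,\ud a=-\int_0^R f'(a)\,V_Q^C(\ud a)$, whose $R\to\infty$ limit exists by condition (4); the limit on the other side is pinned down by condition (2), while condition (3), $\lim_{x\to\infty}|f(x-)|Q(X_T\ge x)=0$, is exactly what kills the boundary term generated by the integration by parts (via $V_Q^D(R)\le Q(X_T>R)$). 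Equivalently one can run the already-established case for the truncated payoffs $f\,\textbf{1}_{[0,N]}$ (extended flat beyond $N$) and then let $N\to\infty$ invoking (2)--(4). I expect this limiting step — splitting $f'$ into positive and negative parts, carrying the jumps of $f$ through, and checking that the boundary contribution really dies under the stated hypotheses — to be the technical heart; everything else is the elementary decomposition \eqref{eq:layercake} followed by one Fubini.
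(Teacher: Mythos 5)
The paper never proves Theorem \ref{theor_v_general}; it is quoted as a ready-made pricing formula with the proof deferred to \cite{viitasaari}, so there is no in-paper argument to compare against line by line. Judged on its own, your proposal is correct and is the natural self-contained route. The telescoping pointwise identity for $f(x)$ is right, including the delicate bookkeeping that the left jump is collected on $\{x\geq s_k\}$ and the right jump on $\{x>s_k\}$ (and $\Delta_-f(0)=0$ by convention), which after applying $\E_Q[B_T^{-1}\,\cdot\,]$ is exactly what produces $\tilde{V}_Q^D(s_k)$ versus $V_Q^D(s_k)$; Fubini on a finite window $[0,R]$ is unproblematic since $f'$ is bounded on compacts off the finitely many jump points; and your observation that $V_Q^D$ and $\tilde{V}_Q^D$ differ only on a Lebesgue-null set disposes of the strict/non-strict ambiguity inside the $\ud a$-integral. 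Two points to tighten in the final step. First, do not split $f'$ into positive and negative parts: the improper integral $\int_0^\infty f'(a)V_Q^D(a)\,\ud a$ is in general only conditionally convergent, and the truncation argument needs no such splitting. Second, the tail term has a clean closed form which makes the limit painless: for $R>s_N$ the discrepancy between $\int_0^R f'(a)V_Q^D(a)\,\ud a$ and the target expectation equals
\begin{equation*}
\E_Q\bigl[B_T^{-1}\bigl(f(X_T)-f(R)\bigr)\textbf{1}_{X_T>R}\bigr],
\end{equation*}
whose first part tends to zero by dominated convergence using $f(X_T)\in L^1(Q)$ and $B_T^{-1}\leq 1$, and whose second part is bounded by $\lvert f(R-)\rvert Q(X_T\geq R)\rightarrow 0$ by condition (\ref{cond1}); no integration-by-parts boundary term beyond this needs to be discussed. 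Note that this argument by itself already yields convergence of the improper integral, so condition (4) enters (as you indicate via $V_Q^C(a)=\int_a^\infty V_Q^D(u)\,\ud u$) only to identify that limit with the Riemann--Stieltjes integral appearing in the definition of $\Pi_Q(X_T)$; your alternative of truncating the payoff flat beyond level $N$ and letting $N\rightarrow\infty$ works for the same reasons.
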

\begin{cor}
\label{kor_pricing_bick}
Let $f\in\Pi_Q(X_T)$. If $f'$ is absolutely continuous on every interval $(s_k,s_{k+1})$, then
\begin{equation}
\label{kaava_bick}
\begin{split}
 V^f &= f(0)\E_Q[B_T^{-1}] + \int_0^{\infty} f''(a)V_Q^C(a)\ud a\\
&+ \sum_{k=0}^{N}\Delta_-f(s_k)\tilde{V}_Q^D(s_k)\\
&+ \sum_{k=0}^{N}\Delta_+f(s_k)V_Q^D(s_k)\\
&+ \sum_{k=0}^N (f'(s_k+)-f'(s_k-))V_Q^C(s_k)
\end{split}
\end{equation}
\end{cor}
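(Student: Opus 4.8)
The plan is to derive \eqref{kaava_bick} from the general pricing formula \eqref{v_general2} of Theorem \ref{theor_v_general} by a single integration by parts in the Riemann--Stieltjes sense. The key observation is that $V_Q^D(a)$ and $V_Q^C(a)$ are related by $V_Q^C(a) = \int_a^\infty V_Q^D(x)\,\ud x$ (up to integrability issues at infinity), so that $V_Q^C(\ud a) = -V_Q^D(a)\,\ud a$; equivalently, the function $a\mapsto V_Q^C(a)$ has derivative $-V_Q^D(a)$ at almost every $a$, which is the classical fact that $\frac{\partial}{\partial a}\E_Q[B_T^{-1}(X_T-a)^+] = -\E_Q[B_T^{-1}\mathbf{1}_{X_T>a}]$. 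First I would record this relation and note that it identifies the measure $V_Q^C(\ud a)$ appearing in the statement.

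Next I would focus on the integral term $\int_0^\infty f'(a)V_Q^D(a)\,\ud a$ in \eqref{v_general2}. On each interval $(s_k,s_{k+1})$ the hypothesis that $f'$ is absolutely continuous lets me write $f'(a)$ as the integral of $f''$, so integration by parts on that subinterval gives
\begin{equation*}
\int_{s_k}^{s_{k+1}} f'(a)V_Q^D(a)\,\ud a = -\int_{s_k}^{s_{k+1}} f'(a)\,V_Q^C(\ud a) = \Bigl[-f'(a)V_Q^C(a)\Bigr]_{s_k+}^{s_{k+1}-} + \int_{s_k}^{s_{k+1}} f''(a)V_Q^C(a)\,\ud a.
\end{equation*}
Summing over $k=0,\dots,N$ (with the convention at $s_0=0$ and the behaviour at $+\infty$ controlled by condition \eqref{cond1} together with $f(X_T)\in L^1(Q)$), the boundary terms telescope into $\sum_{k=0}^N\bigl(f'(s_k+)-f'(s_k-)\bigr)V_Q^C(s_k)$, which is exactly the last sum in \eqref{kaava_bick}, plus a vanishing contribution at infinity. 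The two jump sums $\sum\Delta_-f(s_k)\tilde V_Q^D(s_k)$ and $\sum\Delta_+f(s_k)V_Q^D(s_k)$ and the term $f(0)\E_Q[B_T^{-1}]$ are simply carried over unchanged from \eqref{v_general2}. Combining everything yields \eqref{kaava_bick}.

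The main obstacle is handling the boundary behaviour carefully: one must check that $f'(a)V_Q^C(a)\to 0$ as $a\to\infty$ (so the telescoping sum of boundary terms does not pick up a spurious term), and that all the integrals and series involved are absolutely convergent so the manipulation is legitimate. The decay of $V_Q^C(a)$ as $a\to\infty$ follows from $f(X_T)\in L^1(Q)$ and the polynomial-type control implicit in $f\in\Pi_Q(X_T)$ via \eqref{cond1}; matching the growth of $f'$ against this decay, using that the Riemann--Stieltjes integral $\int_0^\infty f'(a)V_Q^C(\ud a)$ exists by assumption (4) in the definition of $\Pi_Q(X_T)$, is the one place where the hypotheses must be invoked with some care rather than routinely. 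Everything else is bookkeeping with the finite collection of jump points $s_0,\dots,s_N$.
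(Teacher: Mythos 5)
A remark on the comparison itself: this corollary is not proved in the paper --- both Theorem \ref{theor_v_general} and Corollary \ref{kor_pricing_bick} are imported from \cite{viitasaari} (``for proofs and details, we refer to \cite{viitasaari}'') --- so your argument can only be measured against what such a proof must contain. Your skeleton is the natural one: $a\mapsto V_Q^C(a)$ is convex and Lipschitz with $V_Q^C(\ud a)=-V_Q^D(a)\,\ud a$ almost everywhere, so integrating $\int_0^\infty f'(a)V_Q^D(a)\,\ud a$ by parts on each $(s_k,s_{k+1})$, where $f'$ is absolutely continuous and $V_Q^C$ is continuous, produces $\int f''V_Q^C$ plus boundary terms which, with the convention $f'(0-)=0$, collect into $\sum_{k=0}^N(f'(s_k+)-f'(s_k-))V_Q^C(s_k)$; the remaining terms of \eqref{v_general2} are untouched. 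The signs and bookkeeping in your display are correct. (Do note that $V_Q^C(a)=\int_a^\infty V_Q^D(x)\,\ud x$, and indeed every $V_Q^C$-term in \eqref{kaava_bick}, presupposes $\E_Q[B_T^{-1}X_T]<\infty$.)

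The genuine gap is the boundary term at infinity. You assert that $f'(a)V_Q^C(a)\to0$ ``follows from $f(X_T)\in L^1(Q)$ and the polynomial-type control implicit in $f\in\Pi_Q(X_T)$ via \eqref{cond1}''; but the definition of $\Pi_Q(X_T)$ contains no polynomial control at all, and \eqref{cond1} constrains $f$, not $f'$, against the tail of $X_T$. What the hypotheses give directly is convergence of the improper integral $\int_{s_N}^M f'(a)V_Q^D(a)\,\ud a$ as $M\to\infty$; by your own integration-by-parts identity this only says that the combination $f'(M)V_Q^C(M)-\int_{s_N}^M f''(a)V_Q^C(a)\,\ud a$ converges, not that either piece converges separately, let alone that the boundary limit is zero. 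In fact, with $B\equiv1$, $X_T$ standard exponential and $f(x)=\sin(e^x)$, all four conditions defining $\Pi_Q(X_T)$ hold and $f'$ is absolutely continuous on every compact subinterval, yet $f'(a)V_Q^C(a)=\cos(e^a)$ has no limit and $\int_0^\infty f''(a)V_Q^C(a)\,\ud a$ diverges, so \eqref{kaava_bick} does not even parse. Hence this step cannot be settled by ``matching growth against decay'': you must either read the hypothesis as absolute continuity of $f'$ up to infinity in the strong sense that $f''\in L^1(s_N,\infty)$ --- then $f'$ has a finite limit, $f'(M)V_Q^C(M)\to0$ simply because $V_Q^C(M)\to0$, and $\int f''V_Q^C$ converges absolutely --- or add and verify an explicit condition such as $f'(a)V_Q^C(a)\to0$ together with existence of $\int_0^\infty f''(a)V_Q^C(a)\,\ud a$. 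With that one step made honest, the rest of your argument goes through.
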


\subsection{Main theorems}
Assume that we have an arbitrage-free market model $(M)$. 
We assume that we have chosen, for some particular reason, one equivalent martingale measure $Q$ and we are 
given the underlying asset $X_T$. We emphasise that our model may not be complete, we simply choose one particular 
measure $Q$ among (possibly) many martingale measures. We consider a sequence of models 
$(\Omega^n,\mathcal{F}^n, B^n, S^n, (\mathcal{F}_t^n),\P^n)$ with underlying assets $X_T^n$ and measures $Q_n$ which 
converges to the original model in the following sense:
\begin{defn}
\label{def:model_convergence}
The model $(\Omega^n,\mathcal{F}^n, B^n, S^n, (\mathcal{F}_t^n),\P^n)$ with given martingale measures $Q_n$ converges to 
$(\Omega,\mathcal{F}, B, S, (\mathcal{F}_t),\P)$ with $Q$ if; 
\begin{enumerate}
\item
prices $V_{n,Q_n}^D(a)$ of a digital option $\textbf{1}_{X_T^n>a}$ under measure $Q_n$ and the price $V_Q^D(a)$ of a digital option $\textbf{1}_{X_T>a}$ under measure $Q$ satisfies
\begin{equation}
\label{def_digital_conv}
V_{n,Q_n}^D(a) = V_Q^D(a) + O(g(n)),
\end{equation}
for every continuity point of $V_Q^D(a)$ and for some function $g(n)$ such that $g(n)\rightarrow 0$ as $n$ tends to infinity.
\item
The bonds converge with the same rate i.e.
\begin{equation}
\label{bond_convergence}
\E_{Q_n}[(B_T^n)^{-1}] = \E_Q[B_T^{-1}] + O(g(n)).
\end{equation}
\end{enumerate}
\end{defn}
\begin{rmk}
Note that our definition is not far from classical notion of weak convergence. Indeed, if the bond is constant, 
then the convergence (\ref{def_digital_conv}) is the definition of weak convergence. However, 
for our purpose we need more general definition for convergence in order to also cover stochastic interest models. 
Note also that condition (2) is a technical condition which is not of interest. Indeed, it simply guarantees that when 
applying pricing formula (\ref{theor_v_general}) to compute the rate for prices, the first term does not make the 
rate worse. Note also that if the bonds do not convergence, we may still be able to construct an approximation for 
digital prices. As a result, we would obtain that call prices do convergence but put prices do not.
\end{rmk}
Once given the original model, the associated measure $Q$ and the underlying asset $X_T$ we refer to it with notation $(M)$. 
Similarly, we refer to the approximating sequence of models with $(M^n)$ and denote $(M^n)\rightarrow(M)$ if the model converge
in the sense of Definition \ref{def:model_convergence}. Price for an option $f(X_T^n)$ in simplified model are denoted by
$V_{n,Q_n}^f$. Similarly, prices for digital options $\textbf{1}_{X_T^n\geq a}$ and $\textbf{1}_{X_T^n >a}$ are denoted
 by $\tilde{V}_{n,Q_n}^D(a)$ and $V_{n,Q_n}^D(a)$.

Given measures $Q$ and $Q_n$ and the processes $X$ and $X_n$ we consider the following class of payoff functions: 
\begin{defn}
We denote $f\in\Pi(M,M^n)$ if the pricing assumptions are valid for models $(M)$ and $(M^n)$ for every $n$ i.e.
\begin{equation}
\Pi(M,M^n) = \Pi_{Q}(X_T)\cap\left(\bigcap_n \Pi_{Q_n}(X_T^n)\right),
\end{equation}
and $V_Q^D(a)$ and $f(a)$ have no common jump points. 
\end{defn}
\begin{rmk}
Note that everything now depends on the measures $Q_n$ and $Q$ and underlying processes $X_t^n$ and $X_t$. However, we omit this dependence on the notation to make the notation clearer.
\end{rmk}
We need one more definition before stating our main theorem.
\begin{defn}
Let $f\in\bigcap_n \Pi_{Q_n}(X_T^n)$. Then we say that $X_T^n$ is uniform integrable with respect to the pair $(B^n,f)$, if for every $\epsilon>0$ there exists a number $a$ such that
\begin{equation}
\sup_n \lvert\E_{Q_n}[(B_T^n)^{-1}f(X_T^n)\textbf{1}_{X_T^n > a}]\rvert < \epsilon.
\end{equation}
\end{defn}
For uniform integrability with respect to $(B^n,f)$, we use short notation and say that $X_T^n$ is $(B^n,f)$-UI.
\begin{thm}
\label{main_theorem}
Let $f\in\Pi(M,M^n)$ and assume that $(M^n)\rightarrow(M)$. Then the following are equivalent:
\begin{enumerate}
\item
$X_T^n$ is $(B^n,f)$-UI,
\item
there exists $\tilde{g}(n)\rightarrow 0$ as $n$ tends to infinity such that
\begin{equation}
\label{main_price_rate}
V_{n,Q_n}^f = V_Q^f + O(\tilde{g}(n)),
\end{equation}
\item
there exists $\tilde{g}(n)\rightarrow 0$ as $n$ tends to infinity such that
\begin{equation}
\label{integral_eq_cond}
\sup_n\left\lvert\int_0^\infty f'(a)(V_{n,Q_n}^D(a) - V^D(a))\tilde{g}(n)^{-1}\ud a\right\rvert < \infty
\end{equation}
\end{enumerate}
\end{thm}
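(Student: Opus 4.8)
The plan is to funnel the whole statement through the pricing formula \eqref{v_general2}, reducing it to the behaviour of the single quantity $I_n:=\int_0^\infty f'(a)\bigl(V_{n,Q_n}^D(a)-V_Q^D(a)\bigr)\,\ud a$. Apply Theorem~\ref{theor_v_general} to $f$ in $(M)$ and in $(M^n)$ and subtract: the term $f(0)\E_Q[B_T^{-1}]$ matches its approximation up to $O(g(n))$ by \eqref{bond_convergence}, and, since $f$ and $V_Q^D$ have no common jump points, every kink $s_k$ of $f$ is a continuity point of $V_Q^D$, so by \eqref{def_digital_conv} the finitely many jump terms $\Delta_{\pm}f(s_k)\bigl(V_{n,Q_n}^D(s_k)-V_Q^D(s_k)\bigr)$ vanish with $n$ (the $\tilde V^D$ contributions being handled by sandwiching $\tilde V_{n,Q_n}^D(s_k)$ between $V_{n,Q_n}^D(s_k\mp\delta)$ and letting $\delta\downarrow 0$). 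This produces the master identity $V_{n,Q_n}^f-V_Q^f=I_n+o(1)$. Since a sequence is $O(\tilde g(n))$ for some $\tilde g(n)\to 0$ exactly when it tends to $0$ (take $\tilde g(n)=\sup_{m\ge n}|\cdot|+n^{-1}$), conditions (2) and (3) say precisely ``$V_{n,Q_n}^f\to V_Q^f$'' and ``$I_n\to 0$'', so (2)$\Leftrightarrow$(3) is immediate from the master identity; the content lies in the equivalence with (1).

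For (1)$\Rightarrow$(2), fix $\epsilon>0$ and pick a continuity point $a>s_N$ of $V_Q^D$ with $\sup_n\bigl|\E_{Q_n}[(B_T^n)^{-1}f(X_T^n)\mathbf{1}_{X_T^n>a}]\bigr|<\epsilon$ (by (1)) and $\bigl|\E_Q[B_T^{-1}f(X_T)\mathbf{1}_{X_T>a}]\bigr|<\epsilon$ (possible since $f(X_T)\in L^1(Q)$). It is then enough to show the truncated quantities $\E_{Q_n}[(B_T^n)^{-1}f(X_T^n)\mathbf{1}_{X_T^n\le a}]$ converge to their $Q$-counterpart. But $f\mathbf{1}_{[0,a]}$ is a bounded member of $\Pi_{Q_n}(X_T^n)$, so Theorem~\ref{theor_v_general} writes this truncated price as a bond term plus $\int_0^a f'(b)V_{n,Q_n}^D(b)\,\ud b$ plus finitely many jump terms at the $s_k<a$ and at $a$, each of which converges: the bond term by \eqref{bond_convergence}; the integral by dominated convergence, using that on $[0,a]$ the derivative $f'$ is bounded, that $V_{n,Q_n}^D(b)\to V_Q^D(b)$ at a.e.\ $b$, and that $\sup_n\E_{Q_n}[(B_T^n)^{-1}]<\infty$; and the jump terms at the continuity points $s_k$ as above. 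Letting $\epsilon\downarrow 0$ yields (2).

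The main obstacle is (2)$\Rightarrow$(1). Assume $X_T^n$ is not $(B^n,f)$-UI. Writing $T_n(a):=\E_{Q_n}[(B_T^n)^{-1}f(X_T^n)\mathbf{1}_{X_T^n>a}]$ and using that $T_n(a)\to 0$ as $a\to\infty$ for each fixed $n$ (since $f(X_T^n)\in L^1(Q_n)$), one extracts $\epsilon_0>0$, $n_k\uparrow\infty$ and continuity points $a_k\uparrow\infty$ of $V_Q^D$ with $a_k>s_N$ and $|T_{n_k}(a_k)|\ge\epsilon_0$. Applying Theorem~\ref{theor_v_general} to the payoff $b\mapsto f(b)\mathbf{1}_{b>a}$ gives the tail identity $T_n(a)=f(a)V_{n,Q_n}^D(a)+\int_a^\infty f'(b)V_{n,Q_n}^D(b)\,\ud b$ for $a>s_N$, and the same for $T(a):=\E_Q[B_T^{-1}f(X_T)\mathbf{1}_{X_T>a}]$; subtracting, and inserting the master identity together with $I_n\to 0$ (from (2)), one gets, with $D_n:=V_{n,Q_n}^D-V_Q^D$,
\[
|I_{n_k}|\ \ge\ \epsilon_0-|T(a_k)|-|f(a_k)|\,|D_{n_k}(a_k)|-\int_0^{a_k}|f'(b)|\,|D_{n_k}(b)|\,\ud b .
\]
Here $|T(a_k)|\to 0$ since $f(X_T)\in L^1(Q)$, and $|f(a_k)|\,V_Q^D(a_k)\le|f(a_k)|\,Q(X_T\ge a_k)\to 0$ by \eqref{cond1} (using $B_T^n\ge 1$). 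The delicate point, which carries most of the proof, is to show that the two remaining ``diagonal'' terms $|f(a_k)|\,V_{n_k,Q_{n_k}}^D(a_k)$ and $\int_0^{a_k}|f'(b)|\,|D_{n_k}(b)|\,\ud b$ also vanish along $(n_k,a_k)$: the first via tightness of $\{X_T^n\}$ (a consequence of \eqref{def_digital_conv}) after a suitable choice of the $a_k$, but the second cannot be obtained from the fixed-$a$ dominated convergence of the previous paragraph, since that estimate degrades as $a_k\to\infty$; this is exactly where the hypothesis $f\in\bigcap_n\Pi_{Q_n}(X_T^n)$ is used in full --- the uniform tail control encoded in \eqref{cond1} and the existence of the Riemann--Stieltjes integrals $\int_0^\infty f'(a)V_{Q_n}^C(\ud a)$. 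Granting this, $|I_{n_k}|\ge\epsilon_0/2$ for large $k$, contradicting $I_{n_k}\to 0$, and so $X_T^n$ is $(B^n,f)$-UI.
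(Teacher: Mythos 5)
Your reduction to the master identity, the observation that (2) and (3) are just restatements of plain convergence of $V^f_{n,Q_n}$ and of $I_n:=\int_0^\infty f'(a)(V^D_{n,Q_n}(a)-V^D_Q(a))\,\ud a$, and your proof of (1)$\Rightarrow$(2) by truncating at a continuity point and using dominated convergence on $[0,a]$ all match the paper's argument (the paper compresses the bond and jump terms into a ``WLOG $f$ continuous, $f(0)=0$'' reduction, and calls the convergence of the truncated prices ``evident''; your sandwich argument for $\tilde V^D_{n,Q_n}(s_k)$ and your dominated-convergence justification are fine).

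The problem is (2)$\Rightarrow$(1), where your proof is not complete: the two ``diagonal'' terms you isolate are exactly the content of the implication, and you only \emph{grant} them. Neither follows from the stated hypotheses by the route you sketch. Condition \eqref{cond1} controls $|f(a)|\,Q(X_T\geq a)$ for the limit measure $Q$ only, and tightness of $\{X^n_T\}$ does not control $|f(a_k)|\,V^D_{n_k,Q_{n_k}}(a_k)$ when $f$ is unbounded; the per-$n$ conditions in $\bigcap_n\Pi_{Q_n}(X^n_T)$ (integrability, existence of the Riemann--Stieltjes integrals) carry no uniformity in $n$, so they cannot produce the diagonal limit $\int_0^{a_k}|f'(b)|\,|D_{n_k}(b)|\,\ud b\to 0$. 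Worse, that term is intrinsically the wrong object to aim at: hypotheses (2)/(3) control only the \emph{signed} integral $I_n$, and inserting absolute values destroys the cancellation (an oscillating $D_n$ can keep $\int|f'||D_n|$ bounded away from $0$ while $I_n\to0$), so this intermediate estimate need not hold even though the theorem is true.

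The paper's proof of (2)$\Rightarrow$(1) avoids the tail identity and the diagonal entirely, and you already have all the ingredients for it in your own (1)$\Rightarrow$(2) paragraph. Fix a continuity point $a$ of $V^D_Q$ and write the tail as a difference: $\E_{Q_n}[(B^n_T)^{-1}f(X^n_T)\mathbf{1}_{X^n_T>a}]=V^f_{n,Q_n}-\E_{Q_n}[(B^n_T)^{-1}f(X^n_T)\mathbf{1}_{X^n_T\leq a}]$. The first term converges to $V^f_Q$ by (2), and the second converges to $\E_Q[B_T^{-1}f(X_T)\mathbf{1}_{X_T\leq a}]$ by the same truncated-price argument (pricing formula on $[0,a]$ plus dominated convergence) you used for (1)$\Rightarrow$(2); hence the tails converge, for each fixed continuity point $a$, to $\E_Q[B_T^{-1}f(X_T)\mathbf{1}_{X_T>a}]$, which is smaller than $\epsilon$ for $a$ large by Lemma~\ref{lma:perus}. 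Together with $f(X^n_T)\in L^1(Q_n)$, which makes each individual tail small for $a$ large and so disposes of the finitely many remaining indices $n$, this yields $(B^n,f)$-uniform integrability without ever estimating $|D_n|$ pointwise.
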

\begin{rmk}
We restricted that $f$ has finite amount of jumps. If $f$ has countable many jumps, then in addition of 
(\ref{integral_eq_cond}) we also need similar condition for jumps.
\end{rmk}
\begin{rmk}
Note that the rate for prices of general options can be better or worse than for digital options. As a classical example 
omit the bond and consider a sequence of random variables given by $X_n = n^\alpha \textbf{1}_{\left[0,\frac{1}{n}\right]}$ 
for some $\alpha\leq 1$. Now we have $X_n \rightarrow 0$ almost surely and hence in distribution also. Moreover, the 
distribution functions converge with rate $\frac{1}{n}$. However, expectations does not converge at all for $\alpha=1$ and 
for $\alpha<1$, expectations converge at rate $n^{\alpha-1}$ which is worse than the rate for distribution functions. On the 
other hand, CRR-model serves as an example for an approximation when convergence for call prices is better than for digital 
prices. 
\end{rmk}
Our main theorem gives sufficient and necessity conditions on convergence for prices. However, the conditions may be difficult to check. The following corollary gives easier, but only sufficient conditions when we have the convergence of prices.
\begin{cor}
\label{main_theorem2}
Let $f\in\Pi(M,M^n)$ and assume that $(M^n)\rightarrow(M)$. Then
\begin{enumerate}
\item
if there exists $\tilde{g}(n)\rightarrow 0$ as $n$ tends to infinity such that
\begin{equation}
\label{sup_L1_function}
h(a):=\sup_n \lvert f'(a)(V_{n,Q_n}^D(a)-V^D_Q(a)) \tilde{g}(n)^{-1}\rvert\in L^1(\R_+),
\end{equation}
then conditions of Theorem \ref{main_theorem} are satisfied,
\item
if there exist a set $\Upsilon$ of Lebesgue-measure zero, a function $h(a)\in L^1(\R_+)$ and $\tilde{g}(n)\rightarrow 0$ as $n$ tends to infinity such that
\begin{equation}
\label{sup_sup_condition}
\sup_n\sup_{a\in \R_+- \Upsilon} \lvert [h(a)]^{-1}f'(a)(V_{n,Q_n}^D(a) - V_{Q}^D(a))\tilde{g}(n)^{-1}\rvert < \infty,
\end{equation}
then conditions of Theorem \ref{main_theorem} are satisfied.
\end{enumerate}
\end{cor}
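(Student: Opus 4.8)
The plan is to verify condition (3) of Theorem~\ref{main_theorem}, i.e. the uniform bound (\ref{integral_eq_cond}), since by that theorem (3) is equivalent to both (1) (the $(B^n,f)$-UI property) and (2) (convergence of prices at rate $\tilde{g}(n)$); hence once (\ref{integral_eq_cond}) is established with the $\tilde{g}(n)$ furnished in the hypotheses, all the desired conclusions follow. Thus in both parts the task reduces to bounding
$$
\left\lvert \int_0^\infty f'(a)\big(V_{n,Q_n}^D(a) - V_Q^D(a)\big)\tilde{g}(n)^{-1}\,\ud a \right\rvert
$$
uniformly in $n$, where $V_Q^D = V^D$ refers to the original model.

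For part (1), the hypothesis (\ref{sup_L1_function}) is precisely a dominating-function condition: by the very definition of $h$ one has, for every $n$ and every $a\in\R_+$,
$$
\left\lvert f'(a)\big(V_{n,Q_n}^D(a) - V_Q^D(a)\big)\tilde{g}(n)^{-1}\right\rvert \le h(a).
$$
Since $h\in L^1(\R_+)$, the integrand above is absolutely integrable for each fixed $n$, and the absolute value of the integral is at most $\int_0^\infty h(a)\,\ud a$, a bound independent of $n$. Taking the supremum over $n$ gives (\ref{integral_eq_cond}), and Theorem~\ref{main_theorem} applies.

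For part (2), I would first read off from (\ref{sup_sup_condition}) that, with $C$ denoting the finite value of the double supremum, $\lvert f'(a)(V_{n,Q_n}^D(a) - V_Q^D(a))\tilde{g}(n)^{-1}\rvert \le C\,h(a)$ for every $n$ and every $a\in\R_+\setminus\Upsilon$; as $\Upsilon$ is Lebesgue-null this holds for almost every $a$, which suffices because the integral in (\ref{integral_eq_cond}) is taken with respect to Lebesgue measure. Then $Ch\in L^1(\R_+)$ is a dominating function and part (2) is reduced to part (1) with $h$ replaced by $Ch$.

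I do not expect any genuine obstacle: the corollary merely records that these two hypotheses are exactly the kind of domination conditions that allow one to pass from the pointwise digital estimate (\ref{def_digital_conv}) to the integrated estimate required by Theorem~\ref{main_theorem}. The only point needing a word of care is that the integral in (\ref{integral_eq_cond}) be well defined and absolutely convergent for each $n$; this is ensured on the one hand by $f\in\Pi(M,M^n)$, which makes the pricing representation (\ref{v_general2}) valid for every model in the sequence, and on the other hand by the $L^1$-domination itself, which is what makes the difference of the two integral terms converge absolutely.
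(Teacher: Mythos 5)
Your proposal is correct and follows essentially the same route as the paper: the paper's (one-line) proof likewise observes that (\ref{sup_sup_condition}) implies (\ref{sup_L1_function}) and that (\ref{sup_L1_function}) implies the integral condition (\ref{integral_eq_cond}), i.e.\ condition (3) of Theorem \ref{main_theorem}. You merely spell out the domination argument and the almost-everywhere point in part (2) in more detail than the paper does.
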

\subsection{Notes on weak convergence and uniform integrability}
Let the bond $B_t=1$ for every $t$ and put $f(x)=x$. Then we can formulate our main theorem as follows:
\begin{thm}
Assume that $X_n$ converges to $X$ in distribution. Then the following are equivalent:
\begin{enumerate}
\item
$\E_{Q_n}[X_n] \rightarrow \E_Q[X]$,
\item
the family $X_n$ is uniformly integrable.
\end{enumerate}
\end{thm}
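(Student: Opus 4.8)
The plan is to deduce the statement from Theorem \ref{main_theorem} by specialising to $B_t^n = B_t \equiv 1$ and $f(x) = x$, so that $V_Q^f = \E_Q[X]$ and $V_{n,Q_n}^f = \E_{Q_n}[X_n]$. With these choices the $(B^n,f)$-UI property of $X_T^n$ reads precisely ``the family $X_n$ is uniformly integrable'', while assertion $(2)$ of Theorem \ref{main_theorem}, namely $V_{n,Q_n}^f = V_Q^f + O(\tilde g(n))$ for some $\tilde g(n)\to 0$, is equivalent to $\E_{Q_n}[X_n]\to\E_Q[X]$ (a real sequence converges to $\ell$ if and only if it equals $\ell + O(\tilde g(n))$ for some vanishing $\tilde g$). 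Thus it suffices to verify that the hypotheses of Theorem \ref{main_theorem} hold in this setting.

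First I would check $f(x)=x\in\Pi(M,M^n)$. The map $f$ is $C^1$ on $\R_+$ with no jump points, so trivially $f\in\Pi$ and $f$ shares no jump point with $V_Q^D$. Since $X_T$ is $\R_+$-valued, the integrability $X_T\in L^1(Q)$ (which is exactly what is being asserted, and is in any case forced in each direction of the equivalence) gives condition (\ref{cond1}) via $x\,Q(X_T\ge x)\le\E_Q[X_T\mathbf 1_{X_T\ge x}]\to 0$; and $\int_0^\infty f'(a)V_Q^C(\ud a)=\int_0^\infty V_Q^C(\ud a)$ exists because $a\mapsto V_Q^C(a)$ is monotone, bounded and vanishes at infinity. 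The same checks apply verbatim with $Q,X_T$ replaced by $Q_n,X_T^n$, so $f\in\bigcap_n\Pi_{Q_n}(X_T^n)$ as well.

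Next I would check $(M^n)\to(M)$ in the sense of Definition \ref{def:model_convergence}. Condition (\ref{bond_convergence}) is immediate since both bonds are identically $1$. Condition (\ref{def_digital_conv}) asks that $V_{n,Q_n}^D(a)=Q_n(X_T^n>a)\to Q(X_T>a)=V_Q^D(a)$ at every continuity point $a$ of $a\mapsto Q(X_T>a)$, which is precisely the portmanteau characterisation of weak convergence and hence follows from the hypothesis $X_n\Rightarrow X$. The only point needing a word is that Definition \ref{def:model_convergence} is phrased with a single rate $g(n)$: one may take $g(n)=\sup_{a\in D}\lvert Q_n(X_T^n>a)-Q(X_T>a)\rvert$ for a fixed countable dense set $D$ of continuity points, which still tends to $0$, or simply read the definition as requiring a vanishing bound at each fixed continuity point.

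With the hypotheses in place, Theorem \ref{main_theorem} yields the equivalence of $X_T^n$ being $(B^n,f)$-UI and $V_{n,Q_n}^f = V_Q^f + O(\tilde g(n))$, which is the claimed equivalence after the translation above. I expect the only genuine friction to be cosmetic: reconciling the uniform rate $g(n)$ in the definition of model convergence with pointwise (in $a$) weak convergence, and noting the mild circularity that membership in $\Pi_{Q_n}(X_T^n)$ presupposes $X_n\in L^1(Q_n)$, which is supplied by uniform integrability in one direction and is a standing requirement for $(1)$ to be meaningful in the other. For completeness one can also argue directly, bypassing the general theorem: $(2)\Rightarrow(1)$ follows from Skorokhod coupling together with the Vitali convergence theorem, and $(1)\Rightarrow(2)$ follows by using that $x\wedge a$ is bounded and continuous, the elementary bound $X_n\mathbf 1_{X_n>2a}\le 2(X_n-a)^+$, and a finite-set correction to absorb small $n$.
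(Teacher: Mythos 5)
Your proposal matches the paper's own treatment: the paper offers no separate proof, presenting this theorem precisely as Theorem \ref{main_theorem} specialised to $B_t^n\equiv 1$ and $f(x)=x$, and your verifications of $f\in\Pi(M,M^n)$ and $(M^n)\to(M)$ simply fill in what the paper leaves implicit. One small caveat: the suggested uniform rate $g(n)=\sup_{a\in D}\lvert Q_n(X_n>a)-Q(X>a)\rvert$ need not vanish when the law of $X$ has atoms (take $X\equiv 0$, $X_n\equiv 1/n$), but your fallback pointwise-in-$a$ reading of Definition \ref{def:model_convergence} is exactly the interpretation the paper itself adopts in its remark identifying condition (\ref{def_digital_conv}) with weak convergence, so the argument stands.
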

In the context of relation between convergence of random variables and uniform integrability, the following is well known. 
\begin{thm}
Let $X_n,X$ be positive random variables in $L^1$ and assume that $X_n \rightarrow X$ in probability. Then the following are equivalent:
\begin{enumerate}
\item
the family $X_n$ is uniformly integrable,
\item
$X_n \rightarrow X$ in $L^1$,
\item
$\E[X_n] \rightarrow \E[X]$.
\end{enumerate}
\end{thm}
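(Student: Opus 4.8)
The plan is to prove the cycle $(1)\Rightarrow(2)\Rightarrow(3)\Rightarrow(1)$. The implication $(2)\Rightarrow(3)$ is immediate from $\lvert\E[X_n]-\E[X]\rvert\le\E\lvert X_n-X\rvert$, so the real content lies in the remaining two implications.

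For $(1)\Rightarrow(2)$ I would first note that, since $X_n,X\ge 0$, we have $\lvert X_n-X\rvert\le X_n+X$; as a singleton $\{X\}\subset L^1$ is uniformly integrable, the family $\{X_n+X\}_n$ is uniformly integrable, and hence so is the dominated family $\{\lvert X_n-X\rvert\}_n$. Fixing $\epsilon>0$, choose $a$ with $\sup_n\E[\lvert X_n-X\rvert\,\mathbf{1}_{\lvert X_n-X\rvert>a}]<\epsilon$. Then for $0<\delta<a$,
$$
\E\lvert X_n-X\rvert \le \delta + a\,\P(\lvert X_n-X\rvert>\delta) + \E[\lvert X_n-X\rvert\,\mathbf{1}_{\lvert X_n-X\rvert>a}].
$$
The last term is below $\epsilon$ by the choice of $a$, while $\P(\lvert X_n-X\rvert>\delta)\to 0$ by convergence in probability; letting $n\to\infty$ and then $\delta,\epsilon\to 0$ gives $\E\lvert X_n-X\rvert\to 0$.

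The main step is $(3)\Rightarrow(1)$, for which I would use the truncations $\phi_a(x)=x\wedge a$. Since $\phi_a$ is bounded and continuous, $X_n\wedge a\to X\wedge a$ in probability, and as these variables are bounded by $a$, the usual ``subsequence plus bounded convergence'' argument gives $\E[X_n\wedge a]\to\E[X\wedge a]$ for each fixed $a$. Combining this with hypothesis $(3)$,
$$
\E[(X_n-a)^+] = \E[X_n] - \E[X_n\wedge a] \longrightarrow \E[X] - \E[X\wedge a] = \E[(X-a)^+].
$$
On $\{X_n>2a\}$ one has $X_n-a>\tfrac12 X_n$, so $X_n\mathbf{1}_{X_n>2a}\le 2(X_n-a)^+$, whence
$$
\limsup_{n\to\infty}\E[X_n\mathbf{1}_{X_n>2a}] \le 2\E[(X-a)^+].
$$
Since $X\in L^1$, the right-hand side tends to $0$ as $a\to\infty$. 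Given $\epsilon>0$, fix $a$ with $2\E[(X-a)^+]<\epsilon$; this yields $N$ such that $\E[X_n\mathbf{1}_{X_n>2a}]<\epsilon$ for all $n>N$. For the finitely many indices $n\le N$ we enlarge $2a$ to some $a^\ast$ with $\E[X_n\mathbf{1}_{X_n>a^\ast}]<\epsilon$ as well, using $X_n\in L^1$. Then $\sup_n\E[X_n\mathbf{1}_{X_n>a^\ast}]\le\epsilon$, which is uniform integrability.

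I expect the only subtle point to be the step $\E[X_n\wedge a]\to\E[X\wedge a]$, that is, upgrading convergence in probability to convergence of the bounded expectations; this is handled by extracting from any subsequence a further a.s.-convergent subsequence and applying bounded convergence, so that the full sequence of expectations converges. Everything else is elementary truncation and splitting.
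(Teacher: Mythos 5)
Your proof is correct. Note that the paper itself offers no proof of this statement: it is quoted as a well-known classical fact (essentially Vitali's convergence theorem for nonnegative variables), stated only to contrast with the paper's main theorem, where the random variables may live on different probability spaces and only convergence of digital prices is assumed. So there is nothing in the paper to compare against; what you give is the standard textbook argument, and all three implications are sound. In $(1)\Rightarrow(2)$ your use of the facts that a sum of two uniformly integrable families is uniformly integrable and that a family dominated pointwise by a uniformly integrable family is uniformly integrable is legitimate (the domination step follows from $\lvert X_n-X\rvert\mathbf{1}_{\lvert X_n-X\rvert>a}\le (X_n+X)\mathbf{1}_{X_n+X>a}$), and the three-way splitting at levels $\delta$ and $a$ is the usual mechanism for converting convergence in probability into $L^1$ convergence. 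In $(3)\Rightarrow(1)$ the truncation identity $X_n=X_n\wedge a+(X_n-a)^+$ for nonnegative variables, the bound $X_n\mathbf{1}_{X_n>2a}\le 2(X_n-a)^+$, and the final enlargement of the truncation level to handle the finitely many small indices are all handled correctly; the only delicate step, upgrading $X_n\wedge a\to X\wedge a$ in probability to convergence of expectations, is properly justified by the subsequence-plus-bounded-convergence argument. Incidentally, your $(3)\Rightarrow(1)$ step, phrased through $\E[(X_n-a)^+]\to\E[(X-a)^+]$, is exactly the increasing-convex-order viewpoint the paper attributes to Leskel\"a and Vihola in the theorem that follows, so your argument also illuminates why that characterisation is natural here.
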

According to our main theorem, we see directly that uniform integrability and convergence of expectations are equivalent under weak convergence, even if the random variables are defined on different probability spaces. There is also a nice characterisation of uniform integrability due to 
Leskel\"{a} and Vihola \cite{lasse}.  
\begin{defn}
A random variable $X$ is bounded in increasing convex order by $Y$ and denoted by $X\leq_{icx}Y$ if 
\begin{equation}
\E(X-a)^+ \leq \E(Y-a)^+\quad \forall a\in\R.
\end{equation}
\end{defn}
The authors in \cite{lasse} proved the following: 
\begin{thm}
Let $X_n$ be a family of positive random variables (possibly defined on different probability spaces). Then the following are equivalent: 
\begin{enumerate}
\item
the family $X_n$ is uniformly integrable,
\item
the family $X_n$ is icx-bounded by an integrable random variable $X$ i.e. $X_n\leq_{icx}X$ for every $n$,
\item
$
\lim_{t\rightarrow\infty}\sup_n \int_t^\infty \P_n(X_n > a)\ud a = 0.
$
\end{enumerate}
\end{thm}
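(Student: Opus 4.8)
The plan is to translate all three conditions into statements about the stop-loss transforms $\varphi_n(a):=\E_n[(X_n-a)^+]$, where $\E_n$ refers to the probability space carrying $X_n$. Since $X_n\geq 0$, Fubini's theorem gives for every $a\geq0$
$$
\varphi_n(a)=\int_a^\infty\P_n(X_n>s)\,\ud s,\qquad \E_n[X_n\textbf{1}_{X_n>a}]=a\,\P_n(X_n>a)+\varphi_n(a).
$$
Thus (3) says exactly that $\sup_n\varphi_n(a)\to0$ as $a\to\infty$, and (2) says exactly that $\varphi_n(a)\leq\E[(X-a)^+]$ for all $n$ and all $a\in\R$. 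I would also record at the outset that for positive random variables uniform integrability coincides with $\lim_{a\to\infty}\sup_n\E_n[X_n\textbf{1}_{X_n>a}]=0$: the boundedness $\sup_n\E_n[X_n]<\infty$ is free, since as soon as $\sup_n\E_n[X_n\textbf{1}_{X_n>a_0}]<1$ we get $\E_n[X_n]\leq a_0+1$. It then suffices to establish (1)$\Leftrightarrow$(3) and (2)$\Leftrightarrow$(3).

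For (1)$\Leftrightarrow$(3) I would sandwich $\varphi_n$ between truncated first moments. The bound $(X_n-a)^+\leq X_n\textbf{1}_{X_n>a}$ gives $\varphi_n(a)\leq\E_n[X_n\textbf{1}_{X_n>a}]$, so (1)$\Rightarrow$(3). Conversely, on $\{X_n>2a\}$ one has $X_n<2(X_n-a)$, hence $X_n\textbf{1}_{X_n>2a}\leq 2(X_n-a)^+$ and $\E_n[X_n\textbf{1}_{X_n>2a}]\leq2\varphi_n(a)$; taking the supremum over $n$ and letting $a\to\infty$ yields (3)$\Rightarrow$(1).

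The substantial direction is (3)$\Rightarrow$(2), where I would construct the dominating variable by hand. Set $\psi(a):=\sup_n\varphi_n(a)$ for $a\geq0$. As a supremum of convex, nonincreasing functions $\psi$ is convex and nonincreasing; it is finite because $\psi(0)=\sup_n\E_n[X_n]<\infty$ (which itself follows from (3) via the identities above); it vanishes at infinity by (3); and from $(x-a)^+-(x-b)^+\leq b-a$ for $a<b$ one gets $0\leq\psi(a)-\psi(b)\leq b-a$, so $\psi$ is $1$-Lipschitz. Hence the right derivative $\psi'_+$ is nondecreasing, right-continuous, valued in $[-1,0]$, and tends to $0$ at infinity, so $G:=-\psi'_+$ is a genuine survival function on $[0,\infty)$. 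Taking $X\geq0$ with $\P(X>a)=G(a)$, absolute continuity of $\psi$ gives $\E[X]=\int_0^\infty G=\psi(0)<\infty$ and, for $a\geq0$, $\E[(X-a)^+]=\int_a^\infty G=\psi(a)\geq\varphi_n(a)$; for $a<0$, $\E[(X-a)^+]=\E[X]-a=\psi(0)-a\geq\E_n[X_n]-a=\varphi_n(a)$. Therefore $X_n\leq_{icx}X$ for every $n$, i.e. (2). The reverse implication (2)$\Rightarrow$(3) is immediate: $\varphi_n(a)\leq\E[(X-a)^+]$ uniformly in $n$, and $\E[(X-a)^+]\to0$ as $a\to\infty$ because $X$ is integrable.

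I expect the construction step to be the only genuine obstacle: one must recognise the characterisation of stop-loss transforms — a function on $[0,\infty)$ equals $a\mapsto\E[(X-a)^+]$ for some integrable positive $X$ precisely when it is convex, nonincreasing, $1$-Lipschitz and vanishes at infinity — and then verify that these four properties survive the passage to the pointwise supremum $\psi$. Convexity, monotonicity and vanishing at infinity are straightforward; the $1$-Lipschitz property, which is exactly the condition forcing $\P(X>a)\leq1$ for the constructed variable, must be extracted from the uniform slope bound on the individual $\varphi_n$. Everything else is routine manipulation of the layer-cake identities.
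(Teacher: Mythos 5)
Your proof is correct, but there is nothing in the paper to compare it with: the paper does not prove this theorem, it simply quotes it as a result of Leskel\"{a} and Vihola \cite{lasse}. Taken on its own merits, your argument is complete. The reduction of all three conditions to the stop-loss transforms $\varphi_n(a)=\E_n[(X_n-a)^+]$ is the right normal form; the sandwich $\varphi_n(a)\leq \E_n[X_n\textbf{1}_{X_n>a}]$ and $\E_n[X_n\textbf{1}_{X_n>2a}]\leq 2\varphi_n(a)$ settles (1)$\Leftrightarrow$(3) cleanly, and (2)$\Rightarrow$(3) is indeed immediate from integrability of $X$. The only substantive step is your construction in (3)$\Rightarrow$(2), and it holds up: $\psi=\sup_n\varphi_n$ is convex, nonincreasing and $1$-Lipschitz as a supremum of functions with these properties uniformly, $\psi(0)<\infty$ does follow from (3) exactly as you indicate (choose $t_0$ with $\sup_n\varphi_n(t_0)\leq 1$ and bound $\varphi_n(0)\leq t_0+1$), the right derivative of a convex function is nondecreasing and right-continuous, the Lipschitz bound is precisely what keeps $G=-\psi'_+$ in $[0,1]$ so that it is a legitimate survival function (possibly with an atom at $0$), convexity of $\psi$ forces $\psi'_+\to 0$ at infinity, and absolute continuity recovers $\int_a^\infty G = \psi(a)$, hence $\E[(X-a)^+]=\psi(a)\geq\varphi_n(a)$ for $a\geq 0$ and the trivial extension to $a<0$. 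This is, in effect, a self-contained proof of the characterisation that the paper imports as a black box, so it adds content rather than merely reproducing an argument.
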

It can be shown that if $X_n$ converges to an integrable random variable $X$ in distribution and $X_n$ is icx-bounded by $X$, then $X$ is the minimal element which dominates $X_n$ in increasing convex order in the sense that for any other dominating random variable $\tilde{X}$ we have $X\leq_{icx}\tilde{X}$. From financial point of view, this means that for call option our approximating price is too cheap compared to the option's true value.

We wish also to give a comment on weak convergence. It is well known (see \cite{jacod}, for instance) that the following are equivalent:
\begin{enumerate}
\item
$X_n\rightarrow X$ in distribution,
\item
$\E f(X_n)\rightarrow f(X)$ for all continuous bounded functions $f$,
\item
$\E f(X_n)\rightarrow f(X)$ for all functions $f\in C^\infty_0$.
\end{enumerate}
By Theorem \ref{main_theorem} we can deduce that actually the only important feature is that the first derivative of 
$f$ has compact support in order to have $(3)$. 
\section{Exact error expansion in simplified model}
\label{sec_exact}
Our main theorem tells us when the prices convergence. Following same technique, we can also obtain exact error expansion for general option prices if we have the exact error expansion for digital option prices. We clarify this with a guiding example where we have exact error coefficients for one leading term. Evidently, same method can be applied in a case we have many.

Assume we have exact error expansion for digital options and bonds of forms
\begin{equation}
V_{n,Q_n}^D(a) = V_Q^D(a) + A_n(a)g(n) + O(G(n)),
\end{equation}
\begin{equation}
\tilde{V}_{n,Q_n}^D(a) = \tilde{V}_Q^D(a) + \tilde{A}_n(a)g(n) + O(G(n)),
\end{equation}
and
\begin{equation}
\E_{Q_n}[(B_T^n)^{-1}] = \E_Q[B_T^{-1}] + B_n(a)g(n) + O(G(n)),
\end{equation}
where $g(n)\rightarrow 0$ as $n$ tends to infinity, $A_n(a)$ is bounded in $n$, and $G(n) = o(g(n))$. Assume now that $f\in\Pi(M,M^n)$ and satisfies 
\begin{equation}
\sup_n\left\lvert\int_0^\infty f'(a)A_n(a)\ud a\right\rvert < \infty.
\end{equation}
Then, computing formally, we have
\begin{equation}
\begin{split}
V_{n,Q_n}^f &= V_Q^f + g(n)\int_0^\infty f'(a)A_n(a)\ud a\\
&+g(n)\sum_{k=1}^N\Delta_-f(s_k)\tilde{A}_n(s_k)+g(n)\sum_{k=1}^N\Delta_+f(s_k)A_n(s_k)\\
&+f(0)B_n(a)g(n) + G(n)\int_0^\infty f'(a)O_n(a)\ud a\\
&+O(G(n))
\end{split}
\end{equation}
for some function $O_n(a)$ which is bounded in $n$. In order to get an exact error expansion for prices, we would like to show that
\begin{equation}
\label{last_term_hope}
G(n)\int_0^\infty f'(a)O_n(a)\ud a = O(G(n)).
\end{equation}
However, it is not complicated to construct counter-examples to show that this is not true in general. For instance, 
put $O_n(a) = \textbf{1}_{n>a}$, $G(n)=n^{-1}$, and $f(x)=x$. 

In order to have (\ref{last_term_hope}) we can apply Theorem \ref{main_theorem} and Corollary \ref{main_theorem2} 
for a modified digital approximation given by
\begin{equation}
\label{digital_appro_modified}
\hat{V}_{n,Q_n}^D(a) = V_{n,Q_n}^D(a) - A_n(a)g(n)
\end{equation}
and if the rate turns out to be suitable then we can, for example have error expansion of the form (here we have omitted
the effect of bond and jumps)
\begin{equation}
V_{n,Q_n}^f = V_Q^f + g(n)\int_0^\infty f'(a)A_n(a)\ud a + O(G(n)).
\end{equation}
However, typically in an error expansion we are only interested in exact coefficients and the last term is negligible such 
that we only want to know that it is very small compared to other terms. For instance, we could express the error expansion 
in form
\begin{equation}
V_{n,Q_n}^f = V_Q^f + g(n)\int_0^\infty f'(a)A_n(a)\ud a + o(g(n)).
\end{equation}
Instead of applying our main theorems, this kind of reduction can be done with easier conditions to check. For simplicity, we state the result in a simple form. For extensions, see discussion and remarks below. 
\begin{thm}
\label{thm:exact_error}
Let $f\in\Pi(M,M^n)$ be continuous. Assume that $\E_{Q_n}[(B_T^n)^{-1}]$ converges to $\E_Q[B_T^{-1}]$ with 
rate $o(n^{-\beta})$ and digital option prices satisfy
\begin{equation}
V_{n,Q_n}^D(a) = V_Q^D(a) + \frac{A_n(a)}{n^\beta} + O(n^{-\kappa}),
\end{equation}
where $A_n(a)$ is bounded in $n$.
Put 
\begin{equation}
F_{\delta,\theta}(a) = \int_0^a \lvert y^{\delta}f'(y)\rvert^{1+\theta}\ud y.
\end{equation}
If there exist constants $\delta$ and $\theta$ such that
\begin{equation}
\label{cond_eps_delta}
\theta > \frac{\beta}{\kappa-\beta}, \quad \delta > \frac{\theta}{1+\theta},
\end{equation}
\begin{equation}
\label{finite_limit_price}
V_Q^{F_{\delta,\theta}} < \infty,
\end{equation}
\begin{equation}
\label{bounded_coef2}
\sup_n\int_0^\infty \lvert a^\delta f'(a)\rvert^{1+\theta}\lvert A_n(a)\rvert\ud a < \infty,
\end{equation}
and $X_T^n$ is $(B^n,F_{\delta,\theta})$-UI, then
\begin{equation}
\label{exact_error_expansion}
V_{n,Q_n}^f = V_Q^f + \frac{1}{n^\beta}\int_0^\infty f'(a)A_n(a)\ud a + o(n^{-\beta}).
\end{equation}
\end{thm}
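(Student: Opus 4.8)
The plan is to reduce everything to digital prices via Theorem~\ref{theor_v_general}, to split off the explicit leading term, and to estimate the remaining integral by cutting it at a threshold $T_n\to\infty$ whose growth is fixed at the very end with the help of \eqref{cond_eps_delta}.

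Since $f$ is continuous, every jump term $\Delta_\pm f(s_k)$ in \eqref{v_general2} vanishes, so $V_{n,Q_n}^f-V_Q^f$ equals $f(0)$ times the bond increment (which is $o(n^{-\beta})$ by assumption) plus $\int_0^\infty f'(a)(V_{n,Q_n}^D(a)-V_Q^D(a))\,\ud a$; it therefore suffices to prove that $\int_0^\infty f'(a)\bigl(n^\beta(V_{n,Q_n}^D(a)-V_Q^D(a))-A_n(a)\bigr)\,\ud a\to0$. The analytic engine is the elementary remark that $\delta>\theta/(1+\theta)$ in \eqref{cond_eps_delta} makes $\delta(1+\theta)/\theta>1$, so $c_T:=(\int_T^\infty y^{-\delta(1+\theta)/\theta}\,\ud y)^{\theta/(1+\theta)}$ is finite with $c_T\to0$. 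Applying Hölder with exponents $1+\theta$ and $(1+\theta)/\theta$ to $|f'A_n|$ and using \eqref{bounded_coef2} shows that $\int_0^\infty f'A_n\,\ud a$ is absolutely convergent and bounded in $n$ and that $\int_{T_n}^\infty f'A_n\,\ud a\to0$ when $T_n\to\infty$; and since $\int_0^\infty|a^\delta f'|^{1+\theta}V_Q^D(a)\,\ud a=V_Q^{F_{\delta,\theta}}<\infty$ by Fubini and \eqref{finite_limit_price}, the same device gives $\int_0^{T_n}|f'|\,\ud a\le C(1+V_Q^D(T_n)^{-1/(1+\theta)})$ together with $|f(x)-f(T_n)|\le c_{T_n}F_{\delta,\theta}(x)^{1/(1+\theta)}$ for $x>T_n$.

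Put $R_n(a):=V_{n,Q_n}^D(a)-V_Q^D(a)-A_n(a)n^{-\beta}=O(n^{-\kappa})$ and split at $T_n$. On $[0,T_n]$ the integrand is $f'\,n^\beta R_n$, bounded by $Cn^{\beta-\kappa}|f'|$, whose integral is $O(n^{\beta-\kappa}(1+V_Q^D(T_n)^{-1/(1+\theta)}))$. On $[T_n,\infty)$ one rewrites $\int_{T_n}^\infty f'\,n^\beta R_n\,\ud a=n^\beta\int_{T_n}^\infty f'(V_{n,Q_n}^D-V_Q^D)\,\ud a-\int_{T_n}^\infty f'A_n\,\ud a$, the last term being $o(1)$; the first equals $n^\beta(V_{n,Q_n}^{\bar f_n}-V_Q^{\bar f_n})$ for the truncated, continuous payoff $\bar f_n(x):=f(x\vee T_n)-f(T_n)$, and since $\bar f_n(0)=0$, Theorem~\ref{theor_v_general} together with \eqref{price_def} identifies $V^{\bar f_n}=\int_{T_n}^\infty f'V^D\,\ud a$ with $\E[B_T^{-1}(f(X_T)-f(T_n))\mathbf 1_{X_T>T_n}]$. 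Feeding in $|f(x)-f(T_n)|\le c_{T_n}F_{\delta,\theta}(x)^{1/(1+\theta)}$ and a further Hölder step under the sub-probability $B_T^{-1}\,\ud Q$ (resp. $(B_T^n)^{-1}\,\ud Q_n$) bounds $|V^{\bar f_n}|$ by $c_{T_n}\,\E[B_T^{-1}F_{\delta,\theta}(X_T)\mathbf 1_{X_T>T_n}]^{1/(1+\theta)}V^D(T_n)^{\theta/(1+\theta)}$: for the limit model this is $o(1)\cdot V_Q^D(T_n)^{\theta/(1+\theta)}$ by \eqref{finite_limit_price}, while for the approximations the first factor $\to0$ uniformly in $n$ precisely because $X_T^n$ is $(B^n,F_{\delta,\theta})$-UI, and (after removing the $A_n$-contribution first, as in the leading-term estimate) $V_{n,Q_n}^D(T_n)$ differs from $V_Q^D(T_n)$ only by $O(n^{-\kappa})$. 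Finally I would fix $T_n$ by $V_Q^D(T_n)\asymp n^{-s}$ with $s$ in the interval $(\beta(1+\theta)/\theta,\,(\kappa-\beta)(1+\theta))$, which is nonempty exactly because $\theta>\beta/(\kappa-\beta)$; then $n^{\beta-\kappa}V_Q^D(T_n)^{-1/(1+\theta)}=n^{\beta-\kappa+s/(1+\theta)}\to0$ and $n^\beta V_Q^D(T_n)^{\theta/(1+\theta)}=n^{\beta-s\theta/(1+\theta)}\to0$, giving \eqref{exact_error_expansion}. The main obstacle is exactly this tail step and its bookkeeping: $T_n$ must run to infinity fast enough that the truncated prices $V^{\bar f_n}$ (and in particular the $Q_n$-side, where one has only qualitative uniform integrability and $V_{n,Q_n}^D(T_n)$ is merely $O(n^{-\beta})$ before the $A_n$-term is peeled off) fall below $n^{-\beta}$, yet slowly enough that $\int_0^{T_n}|f'|$ stays within the budget $n^{\kappa-\beta}$; checking that the window for $s$ provided by the two inequalities in \eqref{cond_eps_delta} accommodates both requirements at once, without the Hölder exponents $\theta/(1+\theta)$ and $1/(1+\theta)$ swallowing the gain, is where all the care goes.
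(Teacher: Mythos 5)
Your reduction to showing $\int_0^\infty f'(a)\bigl(n^\beta(V_{n,Q_n}^D(a)-V_Q^D(a))-A_n(a)\bigr)\ud a\to 0$ is the right starting point, but the tail step does not close as you set it up, and you have in fact flagged the weak link yourself. Writing $R_n=V_{n,Q_n}^D-V_Q^D-A_nn^{-\beta}$, your far-region term is $n^\beta(V_{n,Q_n}^{\bar f_n}-V_Q^{\bar f_n})$ up to the harmless $\int_{T_n}^\infty f'A_n\,\ud a$, and your H\"older bound on the $Q_n$-side produces the factor $V_{n,Q_n}^D(T_n)^{\theta/(1+\theta)}$. But $V_{n,Q_n}^D(T_n)=V_Q^D(T_n)+A_n(T_n)n^{-\beta}+O(n^{-\kappa})$: the term $A_n(T_n)n^{-\beta}$ cannot be ``peeled off'' from the price at the single point $T_n$ (it is part of the total mass of the measure $(B_T^n)^{-1}\textbf{1}_{X_T^n>T_n}\ud Q_n$ against which you apply H\"older), so the difference is genuinely $O(n^{-\beta})$, not $O(n^{-\kappa})$ as you assert. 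Consequently $n^\beta\,V_{n,Q_n}^D(T_n)^{\theta/(1+\theta)}\gtrsim n^{\beta/(1+\theta)}\to\infty$ for every choice of $T_n$, and the only remaining decaying factors are $c_{T_n}$ and the UI tail $\sup_n\E_{Q_n}[(B_T^n)^{-1}F_{\delta,\theta}(X_T^n)\textbf{1}_{X_T^n>T_n}]$, neither of which carries a rate: the UI hypothesis is purely qualitative, and how fast $T_n\to\infty$ (hence how fast $c_{T_n}\to 0$) depends on the unknown decay of $V_Q^D$, since you fix $T_n$ only through $V_Q^D(T_n)\asymp n^{-s}$ (which in addition presupposes $V_Q^D$ is eventually positive, failing e.g.\ for bounded $X_T$). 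So the window $s\in(\beta(1+\theta)/\theta,\,(\kappa-\beta)(1+\theta))$ controls the two terms you exhibit, but not this third one; as written the argument does not deliver $o(n^{-\beta})$.

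For comparison, the paper avoids any moving threshold. It writes $V_{n,Q_n}^D=V_Q^D+A_nn^{-\beta}+O_n n^{-\kappa}$ with $O^*=\sup_{a,n}\lvert O_n(a)\rvert<\infty$, splits at the fixed point $1$ (where $f'$ is bounded), and on $[1,\infty)$ applies H\"older with exponents $(1+\theta)/\theta$ and $1+\theta$ to $a^{-\delta}\cdot a^{\delta}f'O_nn^{-\kappa}$, absorbing $\lvert O_n\rvert^{1+\theta}\le (O^*)^{\theta}\lvert O_n\rvert$; this yields a prefactor $n^{-\kappa\theta/(1+\theta)}=o(n^{-\beta})$ by \eqref{cond_eps_delta}, times $\bigl[\sup_n\int_0^\infty\lvert a^{\delta}f'(a)\rvert^{1+\theta}\lvert O_n(a)\rvert n^{-\kappa}\ud a\bigr]^{1/(1+\theta)}$, and this supremum is finite because $\lvert O_n\rvert n^{-\kappa}\le V_{n,Q_n}^D+V_Q^D+\lvert A_n\rvert n^{-\beta}$ and the three resulting integrals are uniformly bounded by the $(B^n,F_{\delta,\theta})$-UI assumption (via Fubini, $\int_0^\infty\lvert a^\delta f'\rvert^{1+\theta}V_{n,Q_n}^D\,\ud a=\E_{Q_n}[(B_T^n)^{-1}F_{\delta,\theta}(X_T^n)]$), by \eqref{finite_limit_price}, and by \eqref{bounded_coef2}. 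If you want to salvage your truncation scheme you must arrange for the tail estimate to see only the remainder $O_nn^{-\kappa}$ rather than the full digital price at $T_n$ --- which is precisely what this one-shot H\"older on the remainder accomplishes.
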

We assumed that the payoff function $f$ is continuous. However, since the amount of jumps is finite we simply need to have 
error expansion for digital prices $\tilde{V}_{n,Q_n}^D(a)$ and we only have to add the terms corresponding to jumps 
in order to have expansion for discontinuous functions. Note also that we introduced the theorem in a simple form where 
we only had one leading error term, the bonds does not have effect and the expansion was in powers of $n$. This was due 
to the fact that the main 
contribution of theorem is to explain how integrals of functions $O(n^{-\kappa})$ can be dealt easily in our 
situation. Evidently we could extend similar analysis to cover a case where the error expansion contains more terms or 
the rate is given by some function $G(n)$ which is not of power type. Moreover, if the error expansion for bonds is not 
of given form, it only affects by terms $f(0)(\E_{Q_n}[(B_T^n)^{-1}] - \E_Q[B_T^{-1}])$ and the final error expansion is easy to 
modify from (\ref{exact_error_expansion}). 
\begin{rmk}
Note that (\ref{bounded_coef2}) implies
\begin{equation}
\label{exact_error_bounded_coefficient}
\sup_n\left\lvert\int_0^\infty f'(a)A_n(a)\ud a\right\rvert < \infty,
\end{equation} 
and hence (\ref{exact_error_expansion}) is reasonable.
\end{rmk}
\begin{rmk}
Technical assumptions on Theorem \ref{thm:exact_error} may seem a bit odd at first sight and hence we wish to give financial interpretation. From financial point of view, we are especially interested in having convergence for call prices which is equivalent to considering convergence for discounted expectations. For simplicity, let us omit the bond and assume that $f'=1$. We are interested, for instance, to find an error expansion of the form
$$
V_{n,Q_n}^f = V_Q^f + \frac{A_n(a)}{n}+o(n^{-1}).
$$
Let
$$
\rho = \sup\{p : \max[\sup_n\E_{Q_n}[X_T^n]^p,\E_Q[X_T]^p]<\infty\}.
$$
From financial point of view, the case $\rho<1$ is not of interest and hence we assume that $\rho\geq 1$. Now it is straightforward to see that moments converge for every $p<\rho$, and hence we have to choose $\theta < \rho -1$. On the other hand, we need that 
$$
\frac{\kappa\theta}{1+\theta} > 1 \Leftrightarrow \theta > \frac{1}{\kappa-1}.
$$
This implies that we have to compute the error expansion for digital options up to term 
$$
\kappa > \frac{\rho}{\rho-1}.
$$
In other words, if the underlying assets have fewer moments then we need to compute more accurate error expansion for digital option prices.
\end{rmk}
\begin{rmk}
Let the last term $O(n^{-\kappa})$ be given by $\frac{O_n(a)}{n^\kappa}$ for some function $O_n(a)$ which is bounded in $n$. If $f'$ has compact support, then we always have that
$$
\int_0^\infty f'(a)\frac{O_n(a)}{n^\kappa}\ud a = O(n^{-\kappa}).
$$
If $V_{n,Q_n}^f$ converges to $V_Q^f$ with rate $O(n^{-\beta})$ and we have 
(\ref{exact_error_bounded_coefficient}), then we also have
$$
\sup_n\left\lvert\int_0^\infty f'(a)\frac{O_n(a)}{n^{\kappa-\beta}}\ud a\right\rvert < \infty.
$$
Hence, if we have the following uniform integrability: $\forall\epsilon>0$ there exists $K$ s.t.
$$
\left\lvert\int_K^\infty f'(a)\frac{O_n(a)}{n^{\kappa-\beta}}\ud a\right\rvert < \epsilon,
$$
we also have
$$
V_{n,Q_n}^f = V_Q^f + \frac{1}{n^\beta}\int_0^\infty f'(a)A_n(a)\ud a + o(n^{-\beta}).
$$
\end{rmk}
\begin{rmk}
If we know the exact error expansion for call prices, we can, 
if $f$ has enough smoothness, apply equation (\ref{kaava_bick}) to obtain exact error expansion. This can be useful indeed, 
since the convergence can be faster for call options. Evidently, the error expansion does not depend on the particular 
choice between equations (\ref{v_general2}) and (\ref{kaava_bick}) but it may be easier and more clear to consider equation 
(\ref{kaava_bick}) instead of (\ref{v_general2}). As an example, this becomes particularly clear for binomial models.  
\end{rmk} 
According to Theorem \ref{thm:exact_error} it is sufficient to study the error for digital and call options only and even 
the exact error expansion follows, which can be particularly useful. In next section we illustrate the usage of Theorem \ref{thm:exact_error} 
by considering generalised binomial tree approximation of Black-Scholes model, for which, thanks to Chang and Palmer 
\cite{Palmer}, the error expansions for call and digital prices are explicitly known. 
\subsection{Error expansion for binomial tree prices}
\label{binomial}
In this section we consider binomial tree approximations for Black-Scholes prices. We restrict our analysis to stock options i.e. we consider $X_T^n = S_T^n$ and $X_T = S_T$, where $S_t$ follows a geometric Brownian motion. We use the following notation: $\sigma$ as the volatility,
$r$ as the continuously compounded interest rate and $T$ as the maturity. We denote by $V_{BT(n)}^C(a)$ the price of the call option with strike $a$ and maturity $T$ in the $n$-period binomial model and $V_{BS}^C(a)$ the Black-Scholes price of the call option with strike $a$ and maturity $T$, given by:
$$
V_{BS}^C(a) = S_0\Phi(d_1) - ae^{-rT}\Phi(d_2),
$$
where $\Phi(\cdot)$ is the standard normal distribution function and $d_i$ are given in Appendix A. Similarly, the prices for digital options 
$\textbf{1}_{x\geq a}$ with strike $a$ are denoted by $\tilde{V}_{BT(n)}^D(a)$ and $\tilde{V}_{BS}^D(a)$. 
Prices for option $f(S_T)$ are denoted by $V_{BT(n)}^f$ and $V_{BS}^f$. We use notation $f\in\Pi(BS,BT(n))$ to emphasise that we consider Black-Scholes model and binomial 
tree models with $n$ time steps.
We also need several short notations for different functions of $n$ and $a$. The list and definitions are given in appendix.

We consider the following generalised class of binomial models introduced by Chang and Palmer \cite{Palmer}.
\begin{defn}[The class of binomial models]
Let $\Delta t=\frac{T}{n}$ and $\lambda_n$ an arbitrary bounded function of $n$. We consider
$n$-period binomial model, where
$$
u=e^{\sigma\sqrt{\Delta t}+\lambda_n\sigma^2\Delta t}, \quad d =e^{-\sigma\sqrt{\Delta
t}+\lambda_n\sigma^2\Delta t},
$$
with initial stock price $S_0$. 
\end{defn}
With different choices of $\lambda_n$ one gets different binomial models. For example, the choice $\lambda_n=0$ gives the Cox, Ross
and Rubinstein model, and the
choice $\lambda_n = r/\sigma^2 - 1/2$ gives the Jarrow and Rudd model \cite{Jarrow}. 
The following error expansions in a generalised binomial model for digital and call options was proved in \cite{Palmer}.
\begin{thm}
\label{theor_rate_call}
Let $\Delta t=\frac{T}{n}$ and $\lambda_n$ an arbitrary bounded function of $n$. For the
$n$-period binomial model, where
$$
u=e^{\sigma\sqrt{\Delta t}+\lambda_n\sigma^2\Delta t}, \quad d =e^{-\sigma\sqrt{\Delta
t}+\lambda_n\sigma^2\Delta t},
$$
if the initial stock price is $S_0$ and the strike price is $a$ and maturity $T$, then
\begin{enumerate}
 \item
the price of a digital call option satisfies
\begin{equation}
\label{error_digital_call}
\begin{split}
\tilde{V}^D_{BT(n)}(a) &= \tilde{V}^D_{BS}(a) +
\frac{e^{-rT}e^{-\frac{-d_2^2}{2}}}{\sqrt{2\pi}}\left[\frac{\Delta_n}{\sqrt{n}} -
\frac{d_2(\Delta_n)^2}{2n} + \frac{B_n}{n}\right]\\ 
&+ O\left(\frac{1}{n\sqrt{n}}\right),
\end{split}
\end{equation}
and
\item
the price of a European call option satisfies
\begin{equation}
\label{error_call}
\begin{split}
V^C_{BT(n)}(a) &= V^C_{BS}(a) + \frac{S_0e^{-\frac{-d_1^2}{2}}}{24\sigma\sqrt{2\pi
T}}\frac{A_n - 12\sigma^2T((\Delta_n)^2-1)}{n}\\
&+ O\left(\frac{1}{n\sqrt{n}}\right),
\end{split}
\end{equation}
\end{enumerate}
where $\Delta_n$, $A_n$ and $B_n$ are given in Appendix $A$. 
\end{thm}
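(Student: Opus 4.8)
The plan is to reduce both prices to upper tail probabilities of a binomial distribution and then apply a lattice-sensitive Edgeworth expansion, in the spirit of \cite{Diener} and \cite{Palmer}. Under the risk-neutral measure $Q_n$ the number of up-moves $J$ is $\mathrm{Bin}(n,p_n)$ with $p_n=(e^{r\Delta t}-d)/(u-d)$, and $S_T^n=S_0 u^J d^{n-J}$. Writing $j_a=\lceil(\log(a/S_0)-n\log d)/(\log u-\log d)\rceil$ for the smallest number of up-moves that puts the option in the money, the digital price is $\tilde V^D_{BT(n)}(a)=e^{-rT}Q_n(J\ge j_a)$. For the call I would use the change of measure $q_n=p_n u e^{-r\Delta t}$, which is again a probability by the martingale condition $p_n u+(1-p_n)d=e^{r\Delta t}$, to obtain the exact decomposition
\[
V^C_{BT(n)}(a)=S_0\,\tilde Q_n(J\ge j_a)-ae^{-rT}Q_n(J\ge j_a),
\]
where $J$ is $\mathrm{Bin}(n,q_n)$ under $\tilde Q_n$. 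Thus everything rests on expanding a single object, the binomial upper tail $\sum_{j\ge j_a}\binom{n}{j}\pi^j(1-\pi)^{n-j}$, for $\pi\in\{p_n,q_n\}$.

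First I would record the $\sqrt{\Delta t}$-expansions of the basic quantities. With $\Delta t=T/n$ one has $\log u-\log d=2\sigma\sqrt{\Delta t}$ (the $\lambda_n$ terms cancel), while Taylor expansion gives $p_n=\tfrac12+c_1\sqrt{\Delta t}+c_2\Delta t+\cdots$ and similarly for $q_n$, with $c_1,c_2$ explicit in $r,\sigma$ and, crucially, in $\lambda_n$. The standardized strike is $z_{a,n}=(j_a-n\pi)/\sqrt{n\pi(1-\pi)}$; substituting the expansions of $\pi$ and of $n\pi(1-\pi)$ shows $z_{a,n}\to-d_2$ (resp.\ $-d_1$) with correction terms, while the ceiling in $j_a$ contributes the fractional-part quantity $\Delta_n=1-2\,\mathrm{frac}[\cdots]$, which is common to both tails since $j_a$ depends only on the lattice, not on the measure. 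The heart of the argument is a lattice Edgeworth expansion (Esseen's theorem, or equivalently Stirling's formula together with Euler--Maclaurin summation applied to the tail sum): it produces the Gaussian leading term $\Phi(-z_{a,n})$, an $O(n^{-1/2})$ Edgeworth skewness correction, and an $O(n^{-1/2})$ oscillatory term carrying the continuity correction $\{j_a\}-\tfrac12$, i.e.\ $\Delta_n$. Collecting terms to order $n^{-1}$ and evaluating the normal density and its derivatives at $d_2$ yields the digital expansion, with $B_n$ and the $d_2(\Delta_n)^2$ term emerging from the second-order Edgeworth and lattice contributions.

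For the call I would feed the same expansion into both tails of the decomposition above. The key simplification is that the leading $O(n^{-1/2})$ oscillatory terms of $S_0\tilde Q_n$ and $ae^{-rT}Q_n$ cancel: their coefficients are proportional to $S_0\phi(d_1)$ and $ae^{-rT}\phi(d_2)$ respectively, which are equal by the Black--Scholes identity $S_0\phi(d_1)=ae^{-rT}\phi(d_2)$. This is precisely why the call converges at rate $n^{-1}$ while the digital only at $n^{-1/2}$, and why $\Delta_n$ survives only through $(\Delta_n)^2-1$ at order $n^{-1}$. What remains is to assemble the surviving $O(n^{-1})$ contributions, which after using $d_1=d_2+\sigma\sqrt T$ and the density identity collapse into the single coefficient $A_n$. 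I expect the main obstacle to be exactly this bookkeeping at order $n^{-1}$: one must carry the $\lambda_n$-dependent second-order terms of $p_n$, $q_n$ and of $z_{a,n}$, together with the skewness and second lattice term of the Edgeworth expansion, simultaneously, and verify that all non-oscillatory and all odd-in-$\Delta_n$ pieces combine correctly. The technically delicate point that keeps the final $O(n^{-3/2})$ bound honest is the uniform control of the Edgeworth remainder, uniformly over the position of the strike on the lattice.
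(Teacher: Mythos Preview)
The paper does not prove this theorem: it is quoted verbatim from Chang and Palmer \cite{Palmer} as an input to the author's own results, with only the supplementary remarks that the $o(n^{-1})$ stated there can be sharpened to $O(n^{-3/2})$ and that the formula for the payoff $\textbf{1}_{x>a}$ at a lattice point follows by replacing $\Delta_n$ with $\Delta_n-2$. So there is no ``paper's own proof'' to compare against.

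That said, your outline is exactly the Chang--Palmer/Diener--Diener argument: write the digital as $e^{-rT}Q_n(J\ge j_a)$, split the call via the measure change $q_n=p_nu e^{-r\Delta t}$ into two binomial tails, expand $p_n,q_n$ and the standardized cutoff in powers of $\sqrt{\Delta t}$, apply a lattice Edgeworth expansion (or Stirling plus Euler--Maclaurin) so that the fractional part of $j_a$ produces the oscillatory $\Delta_n$, and then use the Black--Scholes density identity $S_0\phi(d_1)=ae^{-rT}\phi(d_2)$ to kill the $O(n^{-1/2})$ pieces in the call. Your identification of the two delicate points---the $\lambda_n$-dependent bookkeeping at order $n^{-1}$ and the uniformity of the Edgeworth remainder in the position of the strike---is accurate; these are precisely what Chang and Palmer spend most of their effort on, and what one must redo carefully to justify the $O(n^{-3/2})$ remainder the present paper invokes.
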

\begin{rmk}
In \cite{Palmer}, the authors actually stated their result in a form where the remaining term was given by 
$o(n^{-1})$. However, by careful examination of the proof one obtains that it is actually 
$O\left(\frac{1}{n\sqrt{n}}\right)$.
\end{rmk}
\begin{rmk}
In the proof of Theorem \ref{theor_rate_call}, the authors
assumed that the strike satisfies $S_0d^n\leq a \leq \frac{u}{d}S_0u^n$. This is a natural
assumption since in the $n$-period binomial model, the price
of a digital option is $e^{-rT}$ and the price of a call option is $S_0$ if $a<S_0d^n$. Similarly, for
$a>S_0u^n$ the prices for both are zero.
However, it is straightforward to see that the formulas of Theorem \ref{theor_rate_call}
are true for any strike price.
\end{rmk}
\begin{rmk}
\label{rate_rema}
In \cite{Palmer}, the authors used payoff function $f(x)=\textbf{1}_{x\geq a}$ for
digital option with strike $a$. Evidently, if the strike is not on a lattice point, 
the formula (\ref{error_digital_call}) holds for the payoff
$f(x)=\textbf{1}_{x> a}$ as well. For our result, we need similar formula for payoff
$f(x)=\textbf{1}_{x> a}$ when the strike is on a lattice point. By examining the proof of Chang
and Palmer \cite{Palmer}, we obtain that
in this case, the formula is given by (\ref{error_digital_call}), where
$\Delta_n$ is replaced with $\Delta_n(a)-2$. 
In other words, if $a$ is a terminal stock price we have
\begin{equation}
\label{error_digital_call2}
\begin{split}
V^D_{BT(n)}(a) &= V^D_{BS}(a) +
\frac{e^{-rT}e^{-\frac{-d_2^2}{2}}}{\sqrt{2\pi}}\left[\frac{\Delta_n-2}{\sqrt{n}} -
\frac{d_2(\Delta_n-2)^2}{2n} + \frac{B_n}{n}\right]\\ 
&+ O\left(\frac{1}{n\sqrt{n}}\right),
\end{split}
\end{equation}
For the details, see
\cite{Palmer}.
\end{rmk}
\begin{lma}
\label{lma:ok_pricing}
Let $f\in\Pi$ and assume that $f'$ is polynomially bounded i.e. there exists positive constants $c_1,c_2$ and $p$ such that $\lvert f'(a)\rvert\leq c_1 + c_2a^p$ for every 
$a\geq 0$. Then $f\in \Pi(BS,BT(n))$.
\end{lma}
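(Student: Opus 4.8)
The plan is to verify directly the three requirements in the definition of $\Pi(BS,BT(n))=\Pi_Q(S_T)\cap\bigl(\bigcap_n\Pi_{Q_n}(S_T^n)\bigr)$, recalling that here $Q$ is the Black--Scholes martingale measure (so $S_T$ is lognormal and the bond $B_T=e^{rT}$ is deterministic) and $Q_n$ is the martingale measure of the $n$-period binomial model (so $S_T^n$ takes only the finitely many values $S_0u^jd^{n-j}$, $j=0,\dots,n$, and $B_T^n$ is again deterministic). First I would record the elementary consequence of the hypothesis: since $\lvert f'(a)\rvert\le c_1+c_2a^p$ and $f\in\Pi$ has only finitely many, finite jumps, integrating $f'$ and adding the finitely many jumps yields a polynomial bound $\lvert f(a)\rvert\le C_1+C_2a^{p+1}$ for all $a\ge 0$; moreover $f'$ is continuous off $\{s_1,\dots,s_N\}$ with finite one-sided limits at each $s_k$, hence bounded on every compact subset of $\R_+$.

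For the Black--Scholes side, condition $f\in\Pi$ is given. For $f(S_T)\in L^1(Q)$ I use that the lognormal $S_T$ has moments of all orders, so $\E_Q\lvert f(S_T)\rvert\le C_1+C_2\E_Q[S_T^{p+1}]<\infty$. For \eqref{cond1}, I bound $\lvert f(x-)\rvert\,Q(S_T\ge x)\le(C_1+C_2x^{p+1})Q(S_T\ge x)$ and apply Markov's inequality with the $(p+2)$-th moment, $Q(S_T\ge x)\le\E_Q[S_T^{p+2}]x^{-(p+2)}$, so the product tends to $0$. For the Riemann--Stieltjes integral $\int_0^\infty f'(a)V_Q^C(\ud a)$, I note that $a\mapsto V_Q^C(a)=e^{-rT}\E_Q[(S_T-a)^+]=e^{-rT}\int_a^\infty Q(S_T>s)\,\ud s$ is continuous, nonincreasing, of bounded variation, and $C^1$ with $-\tfrac{\ud}{\ud a}V_Q^C(a)=e^{-rT}Q(S_T>a)$; on each finite interval the integral exists because the bounded, finitely-discontinuous $f'$ is Riemann--Stieltjes integrable against the continuous BV function $V_Q^C$, and since $f'$ grows at most polynomially while $e^{-rT}Q(S_T>a)$ decays faster than any power, $\int_0^\infty\lvert f'(a)\rvert e^{-rT}Q(S_T>a)\,\ud a<\infty$, so the improper integral converges.

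For the binomial side, fix $n$. Again $f\in\Pi$ is given. Since $S_T^n$ has finite range, $f(S_T^n)$ is a bounded random variable, so $f(S_T^n)\in L^1(Q_n)$ trivially. The tail condition \eqref{cond1} holds vacuously because $Q_n(S_T^n\ge x)=0$ for $x>S_0u^n$. For the Riemann--Stieltjes integral, $a\mapsto V_{Q_n}^C(a)=e^{-rT}\E_{Q_n}[(S_T^n-a)^+]$ is continuous, piecewise linear, nonincreasing, of bounded variation, and vanishes for $a\ge S_0u^n$; as $f'$ is bounded on the compact set $[0,S_0u^n]$ with finitely many discontinuities and $V_{Q_n}^C$ is continuous there, $\int_0^\infty f'(a)V_{Q_n}^C(\ud a)$ exists. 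Finally, the requirement that $V_Q^D(a)$ and $f(a)$ have no common jump points is immediate: $V_Q^D(a)=e^{-rT}Q(S_T>a)$ is continuous, the lognormal law having no atoms, so it has no jump points at all. Combining the three verifications gives $f\in\Pi(BS,BT(n))$.

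The step I expect to be the main obstacle is the careful treatment of the Riemann--Stieltjes integral in the Black--Scholes case: one must control the behaviour at infinity (super-polynomial decay of the lognormal survival function against at-most-polynomial growth of $f'$) and at the finitely many discontinuities of $f'$ (handled by continuity of $V_Q^C$, so these points contribute nothing to the Stieltjes integral). The remaining pieces reduce to routine moment estimates for the lognormal distribution and the trivial finiteness of the binomial model.
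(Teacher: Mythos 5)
Your proof is correct and takes essentially the same route as the paper: the binomial side is disposed of by the finiteness of the support of $S_T^n$, and the Black--Scholes side comes down to integrating the polynomially bounded $f'$ against the super-polynomially decaying lognormal tail, which the paper packages via the Gaussian tail bound $\Phi(d_2)\leq \frac{1}{\sqrt{2\pi}\lvert d_2\rvert}e^{-\frac{d_2^2}{2}}$ and the auxiliary integrability lemma (Lemma \ref{lemma_proof}). Your write-up is merely more explicit, verifying each condition in the definition separately (including $L^1$-integrability, the tail condition, and the absence of common jump points, which the paper treats as evident or subsumes in the single bound \eqref{BS-ok}), but the analytic content is the same.
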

Now we can state our main theorem of this section:
\begin{thm}
\label{theor_rate_general}
Let $f\in\Pi$ and assume that $f'$ is polynomially bounded.
Then for the $n$-period generalised binomial model, the price of a European option $f(S_T)$ satisfies
\begin{equation}
 \label{error_general}
\begin{split}
V^f_{BT(n)} &= V^f_{BS} + \frac{1}{\sqrt{n}}\int_0^\infty
f'(a) \frac{e^{-rT}e^{-\frac{d_2^2}{2}}}{\sqrt{2\pi}}\Delta_n\ud a \\
&+ \frac{1}{n}\int_0^\infty f'(a)\frac{e^{-rT}e^{-\frac{d_2^2}{2}}}{\sqrt{2\pi}}\left[B_n -\frac{d_2\Delta_n^2}{2}\right]\ud a\\
&+ \sum_{N_1(s_k)}\Delta_-f(s_k)J_n(s_k)+ \sum_{N_1(s_k)}\Delta_+f(s_k)\widehat{J}_n(s_k)\\
&+ \sum_{N_2(s_k)}\left[\Delta_-f(s_k)+\Delta_+f(s_k)\right]J_n(s_k)\\
&+o\left(\frac{1}{n}\right),
\end{split}
\end{equation}
where the functions $\Delta_n$, $B_n$, $J_n$ and $\widehat{J}_n$ are defined in Appendix A, $N_1(s_k)$ is the set of jumps of $f$ on the lattice points and $N_2(s_k)$ is the set of jumps of $f$ not on the lattice points.
Moreover, the integrals are bounded in $n$. 
\end{thm}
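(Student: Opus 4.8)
The plan is to substitute the pricing formula (\ref{v_general2}) of Theorem \ref{theor_v_general} into $V^f_{BT(n)}$ and into $V^f_{BS}$, take the difference, and feed in the Chang--Palmer expansions of Theorem \ref{theor_rate_call} together with Remark \ref{rate_rema} for strikes on the lattice. Since $f'$ is polynomially bounded, Lemma \ref{lma:ok_pricing} gives $f\in\Pi(BS,BT(n))$, so (\ref{v_general2}) is available in both models. The term $f(0)\E[B_T^{-1}]$ is the same in both, because the discount factor over $[0,T]$ equals $e^{-rT}$ in every $n$-period binomial model, so $\E_{Q_n}[(B^n_T)^{-1}]=e^{-rT}=\E_Q[B^{-1}_T]$ and this term cancels identically (in particular the bond hypothesis of Theorem \ref{thm:exact_error} holds trivially); the $s_0=0$ term also drops out, since $\Delta_-f(0)=0$ by convention and $V^D_{BT(n)}(0)=V^D_{BS}(0)=1$. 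What remains is the integral $\int_0^\infty f'(a)\bigl(V^D_{BT(n)}(a)-V^D_{BS}(a)\bigr)\,\ud a$ plus the two finite jump sums built from the $\geq$- and $>$-digital prices.

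For the jump sums there are only finitely many jump points $s_1,\dots,s_N$, and for each fixed $n$ only finitely many of them are terminal prices of the $n$-step tree. If $s_k$ is not on the lattice, then $V^D_{BT(n)}(s_k)=\tilde V^D_{BT(n)}(s_k)$, the two Black--Scholes digital prices coincide, and substituting (\ref{error_digital_call}) gives $(\Delta_-f(s_k)+\Delta_+f(s_k))J_n(s_k)$ up to $O(n^{-3/2})$; if $s_k$ is on the lattice we use (\ref{error_digital_call}) for the $\geq$-price and (\ref{error_digital_call2}) for the $>$-price, producing $\Delta_-f(s_k)J_n(s_k)+\Delta_+f(s_k)\widehat{J}_n(s_k)$ up to $O(n^{-3/2})$, with $J_n$ and $\widehat{J}_n$ the appendix functions carrying $\Delta_n$ and $\Delta_n-2$ respectively. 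Summing over the finite sets $N_1$ and $N_2$ yields exactly the jump contributions of (\ref{error_general}), the error being $o(n^{-1})$.

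For the integral term, insert (\ref{error_digital_call}), valid at every (non-lattice) point $a$: $V^D_{BT(n)}(a)-V^D_{BS}(a)=\frac{e^{-rT}e^{-d_2^2/2}}{\sqrt{2\pi}}\bigl[\frac{\Delta_n}{\sqrt n}-\frac{d_2\Delta_n^2}{2n}+\frac{B_n}{n}\bigr]+R_n(a)$, where $R_n(a)=O(n^{-3/2})$ at each fixed $a$. Multiplying by $f'(a)$ and integrating, the first two brackets produce exactly the $1/\sqrt n$ and $1/n$ integrals in (\ref{error_general}) (the $-\frac{d_2\Delta_n^2}{2n}$ and $\frac{B_n}{n}$ terms combine into $\frac1n[B_n-\frac{d_2\Delta_n^2}{2}]$). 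Because $\Delta_n$ and $B_n$ are bounded in $n$ and the Gaussian factor $e^{-d_2^2/2}$ provides super-polynomial decay in $a$ (dominating the polynomial growth of $f'$ and the logarithmic growth of $|d_2|$), these two integrals converge absolutely and are bounded uniformly in $n$, which is the last assertion of the theorem. Everything now reduces to showing $\int_0^\infty f'(a)R_n(a)\,\ud a=o(n^{-1})$.

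This is the main obstacle, since pointwise $R_n(a)=O(n^{-3/2})$ does not by itself control the integral (cf.\ the counterexample following Theorem \ref{thm:exact_error}). I would handle it via the modified digital approximation of (\ref{digital_appro_modified}), $\hat V^D_n(a):=V^D_{BT(n)}(a)-\frac{e^{-rT}e^{-d_2^2/2}}{\sqrt{2\pi}}\bigl[\frac{\Delta_n}{\sqrt n}-\frac{d_2\Delta_n^2}{2n}+\frac{B_n}{n}\bigr]$, which converges to $V^D_{BS}(a)$ at rate $O(n^{-3/2})$ with vanishing leading coefficient, and apply Theorem \ref{thm:exact_error} with $A_n\equiv 0$, $\beta=1$, $\kappa=\frac{3}{2}$. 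Then (\ref{cond_eps_delta}) requires $\theta>2$ and $\delta>\theta/(1+\theta)$ (take $\theta=3$, $\delta=\frac{4}{5}$); (\ref{bounded_coef2}) is trivial; (\ref{finite_limit_price}) holds because polynomial boundedness of $f'$ makes $F_{\delta,\theta}$ polynomially bounded, hence $V^{F_{\delta,\theta}}_{BS}<\infty$ since $S_T$ is lognormal; and $S^n_T$ is $(B^n,F_{\delta,\theta})$-UI because $B^n_T\equiv e^{-rT}$ and the binomial terminal prices have moments of every order bounded uniformly in $n$ (they converge to the lognormal moments), so $\{F_{\delta,\theta}(S^n_T)\}_n$ is uniformly integrable. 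Theorem \ref{thm:exact_error} then gives that the quantity obtained from (\ref{v_general2}) with $\hat V^D_n$ in place of $V^D$ equals $V^f_{BS}+o(n^{-1})$; its integral part is precisely $\int_0^\infty f'(a)R_n(a)\,\ud a$ and its jump part is $O(n^{-3/2})$, so $\int_0^\infty f'(a)R_n(a)\,\ud a=o(n^{-1})$, and assembling the pieces yields (\ref{error_general}). One technical point: Theorem \ref{thm:exact_error} is phrased for continuous $f$, which I would circumvent by splitting $f=f_c+f_j$ into a continuous part with the same a.e.\ derivative (still in $\Pi$ with polynomially bounded derivative) and a finite pure-jump part, applying the theorem to $f_c$ and treating $f_j(S^n_T)$ as a finite linear combination of digital prices handled exactly as the jump sums above.
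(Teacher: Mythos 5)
Your proposal is correct and follows essentially the same route as the paper: the paper's (very terse) proof likewise combines the pricing formula via Lemma \ref{lma:ok_pricing}, the Chang--Palmer expansions of Theorem \ref{theor_rate_call} with Remark \ref{rate_rema} for lattice-point jumps, Gaussian-decay bounds (its (\ref{upper_bound1})--(\ref{upper_bound2}) together with Lemma \ref{lemma_proof}) to control the leading integrals uniformly in $n$, and the H\"older/uniform-integrability machinery of Theorem \ref{thm:exact_error} to show the remaining $O(n^{-3/2})$ remainder integrates to $o(n^{-1})$. Your write-up merely makes explicit the parameter choices ($\beta=1$, $\kappa=\tfrac{3}{2}$, $\theta$, $\delta$), the modified digital approximation, and the continuous-plus-jump splitting that the paper leaves implicit.
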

\begin{cor}
\label{theor_rate_c2}
Let $f\in\Pi$ such that $f'(a)$ is absolutely continuous on every interval $(s_k,s_{k+1})$. Moreover, assume that 
$f'$ and $f''$ are polynomially bounded. 
Then for the $n$-period generalised binomial model, the price of a European option $f(S_T)$ satisfies
\begin{equation}
 \label{error_general_c2}
\begin{split}
V^f_{BT(n)} &= V^f_{BS} + \frac{1}{n}\int_0^\infty
f''(a)H_n(a)\ud a \\
&+ \sum_{N_1(s_k)}\Delta_-f(s_k)J_n(s_k)+ \sum_{N_1(s_k)}\Delta_+f(s_k)\widehat{J}_n(s_k)\\
&+ \sum_{N_2(s_k)}\left[\Delta_-f(s_k)+\Delta_+f(s_k)\right]J_n(s_k)\\
&+ \frac{1}{n}\sum_{k=0}^N (f'(s_k+)-f'(s_k-))H_n(s_k)\\
&+o\left(\frac{1}{n}\right),
\end{split}
\end{equation}
where the functions $H_n$, $J_n$ and $\widehat{J}_n$ are defined in Appendix A, $N_1(s_k)$ is the set of jumps of $f$ on the lattice points and $N_2(s_k)$ is the set of jumps of $f$ not on the lattice points.
Moreover, the integral is bounded in $n$. 
\end{cor}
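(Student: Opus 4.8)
The plan is to repeat the proof of Theorem \ref{theor_rate_general} verbatim, but with the second pricing identity replacing the first. Since $f'$ is polynomially bounded, Lemma \ref{lma:ok_pricing} gives $f\in\Pi(BS,BT(n))$, hence $f\in\Pi_Q(S_T)$ and $f\in\Pi_{Q_n}(S_T^n)$ for every $n$; together with the assumed absolute continuity of $f'$ on each $(s_k,s_{k+1})$, Corollary \ref{kor_pricing_bick} applies to $f(S_T)$ under the Black--Scholes measure and to $f(S_T^n)$ under each $Q_n$. Subtracting the two instances of \eqref{kaava_bick}, and using that in both models the discount factor is the deterministic $e^{-rT}$ (so the $f(0)\E[B_T^{-1}]$ terms cancel exactly, which is why no such term occurs in \eqref{error_general} or \eqref{error_general_c2}), yields
\begin{equation*}
\begin{split}
V_{BT(n)}^f-V_{BS}^f &= \int_0^\infty f''(a)\bigl(V^C_{BT(n)}(a)-V^C_{BS}(a)\bigr)\,\ud a\\
&\quad+ \sum_{k=0}^N\Delta_-f(s_k)\bigl(\tilde V^D_{BT(n)}(s_k)-\tilde V^D_{BS}(s_k)\bigr)+\sum_{k=0}^N\Delta_+f(s_k)\bigl(V^D_{BT(n)}(s_k)-V^D_{BS}(s_k)\bigr)\\
&\quad+ \sum_{k=0}^N(f'(s_k+)-f'(s_k-))\bigl(V^C_{BT(n)}(s_k)-V^C_{BS}(s_k)\bigr).
\end{split}
\end{equation*}

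Next I would substitute the expansions of Theorem \ref{theor_rate_call} and Remark \ref{rate_rema}. As there are only finitely many points $s_k$, the digital expansions \eqref{error_digital_call} and \eqref{error_digital_call2} turn the two jump sums into $\sum_{N_1(s_k)}\Delta_-f(s_k)J_n(s_k)+\sum_{N_1(s_k)}\Delta_+f(s_k)\widehat J_n(s_k)+\sum_{N_2(s_k)}(\Delta_-f(s_k)+\Delta_+f(s_k))J_n(s_k)$ up to a finite sum of $O(n^{-3/2})$ terms, hence up to $o(n^{-1})$, the function $\widehat J_n$ arising from the lattice-point version of \eqref{error_digital_call} that carries $\Delta_n-2$ in place of $\Delta_n$. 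Since the call price is continuous in the strike, \eqref{error_call} gives $V^C_{BT(n)}(s_k)-V^C_{BS}(s_k)=\tfrac1nH_n(s_k)+O(n^{-3/2})$, which produces the last sum of \eqref{error_general_c2}. Thus the proof reduces to showing
\begin{equation*}
\int_0^\infty f''(a)\bigl(V^C_{BT(n)}(a)-V^C_{BS}(a)\bigr)\,\ud a = \frac1n\int_0^\infty f''(a)H_n(a)\,\ud a+o\!\left(\frac1n\right),
\end{equation*}
together with boundedness in $n$ of $\int_0^\infty f''(a)H_n(a)\,\ud a$.

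Boundedness is routine: $H_n(a)$ carries the factor $e^{-d_1^2/2}$, which decays faster than any power of $a$, while $\Delta_n$ and $A_n$ grow at most like $(\log a)^2$ and are bounded in $n$, so $|f''H_n|$ is dominated by a fixed integrable function despite the polynomial growth of $f''$. For the displayed approximation, \eqref{error_call} gives the pointwise identity $V^C_{BT(n)}(a)-V^C_{BS}(a)=\tfrac1nH_n(a)+O(n^{-3/2})$, and one must check that $f''$ integrated against the $O(n^{-3/2})$ remainder over all of $\R_+$ is $o(n^{-1})$. This is precisely the phenomenon handled by Theorem \ref{thm:exact_error} and the reduction preceding it (indeed it is how Theorem \ref{theor_rate_general} itself is proved), applied here with $\beta=1$, $\kappa=\tfrac32$: via $V^C_{BT(n)}(a)-V^C_{BS}(a)=\int_a^\infty(\tilde V^D_{BT(n)}(x)-\tilde V^D_{BS}(x))\,\ud x$ one converts the remainder estimate into a tail estimate for the digital error against $F_{\delta,\theta}$, which is controlled as soon as $S_T^n$ is $(B^n,F_{\delta,\theta})$-UI for some $(\delta,\theta)$ satisfying \eqref{cond_eps_delta}. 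That UI property is automatic here: $F_{\delta,\theta}$ is polynomially bounded because $f''$ is, and, $\lambda_n$ being bounded, the binomial terminal prices $S_T^n$ have moments of every order that are bounded uniformly in $n$ (and converge to the lognormal moments), so $\sup_n\E_{Q_n}[(B_T^n)^{-1}F_{\delta,\theta}(S_T^n)\mathbf{1}_{S_T^n>a}]\to0$ as $a\to\infty$; the same moment bounds give \eqref{bounded_coef2}.

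I expect the real difficulty to be exactly this last step: upgrading the \emph{pointwise} $O(n^{-3/2})$ control of the call-price error to an $o(n^{-1})$ bound once $f''$ is integrated against it over the whole half-line, which, outside a bounded range of strikes, is genuinely a uniform-integrability matter rather than crude domination. Equivalently, Corollary \ref{theor_rate_c2} can be read off from Theorem \ref{theor_rate_general} by integrating by parts in the two integral terms of \eqref{error_general}: both call-price errors equal $0$ at strike zero (the call value there is $S_0$ in either model) and vanish as the strike tends to infinity, so the boundary terms drop; the jumps of $f'$ at the $s_k$ contribute $\tfrac1n\sum_{k=0}^N(f'(s_k+)-f'(s_k-))H_n(s_k)+o(n^{-1})$; and the interior term becomes $\tfrac1n\int_0^\infty f''(a)H_n(a)\,\ud a+o(n^{-1})$, the $o(n^{-1})$ remainder of Theorem \ref{theor_rate_general} surviving precisely by the same moment/UI estimate. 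Either way, assembling the pieces gives \eqref{error_general_c2}.
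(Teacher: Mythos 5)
Your proposal is correct and follows essentially the route the paper intends: the paper's own proof of Corollary \ref{theor_rate_c2} simply declares it "essentially the same" as that of Theorem \ref{theor_rate_general}, i.e.\ apply the call-based pricing identity (\ref{kaava_bick}) in both models, insert the Chang--Palmer expansions of Theorem \ref{theor_rate_call} and Remark \ref{rate_rema}, and dispose of the integrated remainder via the H\"older/uniform-integrability mechanism of Theorem \ref{thm:exact_error} together with the domination bounds of Lemma \ref{lemma_proof} (guaranteed by Lemma \ref{lma:ok_pricing} and the uniformly bounded binomial moments). You in fact supply more detail than the paper does, and your closing integration-by-parts remark is a harmless alternative reading of the same computation.
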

Our theorems show that for a wide class of options,
the rate of convergence is of order $O(\frac{1}{\sqrt{n}})$. Moreover, the derivative $f'$ of the payoff
function is usually absolutely continuous and hence the second derivative exists almost everywhere.
In this case, the rate is of order $O(n^{-1})$ and if the
function has discontinuities, then the rate is of order
$O(1/\sqrt{n})$. 

Walsh also gave an explicit formula for the error and proved that under his assumptions, the rate of convergence is of
order $O(1/\sqrt{n})$ if the discontinuity is not on a lattice point and the rate of convergence is of order $O(n^{-1})$ 
if all the discontinuities are on lattice points. Compared to the formula of Walsh, our result covers a wider class of 
payoff functions and a wide class of binomial models. Note also that we obtain, as Walsh did under his assumptions, 
that if $f'$ is absolutely continuous on every interval $(s_k,s_{k+1})$, all the 
discontinuities lie on lattice points and the function is regular, then 
the rate of convergence is of order $O(n^{-1})$ for any binomial model within our class. Indeed, if all the discontinuities
lie on lattice points, then $\Delta_n(s_k)=1$ for all jump points $s_k$ and by regularity, $\Delta_-f(s_k)=\Delta_+f(s_k)$.
Thus the coefficients of the term $\frac{1}{\sqrt{n}}$ cancel and we obtain convergence of order $O(n^{-1})$.

\subsection{Smooth convergence in binomial tree approximation and improving speed of convergence}
Typically binomial prices oscillates around Black-Scholes prices, and for general options this is indeed so as can be seen from Theorem 
\ref{theor_rate_general}. This is unwanted feature, since when the prices oscillates around Black-Scholes price, it may happen that by doubling the amount of steps $n$ one also double the error.
The main reason why the authors in \cite{Palmer} considered generalised class of binomial models was the smooth convergence, formal definition given below, for which this oscillation feature no longer holds.
\begin{defn}
We say that $a_n$ converges smoothly to $a$ with order $\beta$, if
\begin{equation}
a_n = a + \frac{C}{n^\beta} + o(n^{-\beta})
\end{equation}
for some constant $C$. 
\end{defn}
When the convergence is smooth, it follows that for large $n$ the sequence $a_n$ is monotone and hence there is no oscillation. In this case, we can use standard extrapolation techniques to accelerate the convergence. For more details on smooth convergence of binomial prices, 
we refer to \cite{Tian} and \cite{Palmer}.

In \cite{Palmer}, the authors proved that with suitable choice of $\lambda_n$ in generalised binomial model, one can obtain smooth convergence for digital and call options. More precisely, they showed that if $\lambda_n$ is chosen such that 
the strike coincides with a stock price at terminal node, then the convergence for digital options is smooth with order $\frac{1}{2}$ and the convergence for call options is smooth with order $1$. More interesting, Chang and Palmer proposed a center binomial model, where $\lambda_n$ is chosen such that 
the strike is a geometric average of two terminal stock price. In this center binomial model, they obtained smooth convergence of order $1$ for both, digital and call option.
In particular, for center binomial model they chose
\begin{equation}
\label{lambda_centered}
\lambda_n(a) = \frac{\log\left(\frac{a}{S_0}\right) - (2j_0 - 1 - n)\sigma\sqrt{\Delta t}}{n\sigma^2\Delta t},
\end{equation}
where 
\begin{equation}
\tilde{\gamma} = \frac{\log\left(\frac{a}{S_0}\right) + n\sigma\sqrt{\Delta t}}{2\sigma^2\Delta t},
\end{equation}
\begin{equation}
j_0 = \min\{m\in\N : m \geq \tilde{\gamma}\},
\end{equation}
and $a$ is the strike of the option. 
Since for smooth convergence the choice of $\lambda_n$ depends on the strike, we cannot obtain smooth convergence in general 
for every option $f$. However, if we do not use direct binomial approximation but instead combine different center 
binomial models we obtain smooth convergence for more general payoff functions $f$. In principal, we could simply approximate 
Black-Scholes price by computing center binomial digital price for jump terms and integral
\begin{equation}
\label{smooth_principal}
\int_0^\infty f'(a)V_{n,\lambda(a)}^D(a)\ud a,
\end{equation}
where $V_{n,\lambda(a)}^D(a)$ is the price of a digital option in center binomial model with strike $a$ and $\lambda(a)$ 
given by (\ref{lambda_centered}). From implementation point of view however, we can only approximate integral 
(\ref{smooth_principal}) 
with suitable numerical integration method. We show how to obtain smooth convergence 
only for functions which has enough smoothness, but more general cases can be obtained by obvious modifications 
(see Remark \ref{rema_smooth_extension}).

We consider an interval $[0,n^\alpha]$ for some $\alpha>0$ and trapezoidal approximation with $n$ equidistant points for integrals of type
$$
\int_0^{n^\alpha}g(a)\ud a.
$$
The approximation is given by 
\begin{equation}
\label{trapezoid_appro}
I_{t,n} = \frac{n^{\alpha-1}}{2}\sum_{k=0}^{n-1}\left(g(kn^{\alpha-1})+ g((k+1)n^{\alpha-1})\right).
\end{equation}
\begin{thm}
\label{thm:smooth}
Let $f$ be three times continuously differentiable such that all derivatives of $f$ are polynomially bounded. 
Let $\lambda_n(a)$ be given by (\ref{lambda_centered}) and let $V_{n,\lambda(a)}^D(a)$ denote 
the price of a digital option with strike $a$ in center binomial model with $\lambda_n(a)$. If $\alpha<\frac{1}{3}$, then
\begin{equation}
\label{smooth_trapet}
\begin{split}
V_{t,n}^f &= \frac{n^{\alpha-1}}{2}\sum_{k=0}^{n-1}f'(kn^{\alpha-1})V_{n,\lambda(kn^{\alpha-1})}^D(kn^{\alpha-1})\\
&+ \frac{n^{\alpha-1}}{2}\sum_{k=0}^{n-1}f'((k+1)n^{\alpha-1})V_{n,\lambda((k+1)n^{\alpha-1})}^D((k+1)n^{\alpha-1})
\end{split}
\end{equation}
converges smoothly to Black-Scholes price with order $\frac{1}{n}$ i.e. 
\begin{equation}
V_{t,n}^f = V_{BS}^f + \frac{C}{n} + o\left(\frac{1}{n}\right),
\end{equation}
where
\begin{equation}
\label{smooth_constant}
C = \int f'(a)C(a)\ud a,
\end{equation}
\begin{equation}
\label{smooth_const_function}
\begin{split}
C(a) &= \frac{e^{-rT}e^{\frac{d_2^2}{2}}}{\sqrt{2\pi}}\cdot\frac{d_1^3+d_1d_2^2+2d_2-4d_1}{24}\\
&+\frac{e^{-rT}e^{\frac{d_2^2}{2}}}{\sqrt{2\pi}}\cdot\left[\frac{(2-d_1d_2-d_1^2)\sqrt{T}}{6\sigma}r + \frac{Td_1r^2}{2\sigma^2}\right].
\end{split}
\end{equation}
\end{thm}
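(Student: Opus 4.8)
The plan is to combine the pricing formula of Theorem~\ref{theor_v_general} with the centred-model digital expansion of Theorem~\ref{theor_rate_call}, and then control the error of the trapezoidal rule applied to a genuinely \emph{smooth} integrand. Since $f\in C^3$ has no jumps and $f'$ is polynomially bounded, Lemma~\ref{lma:ok_pricing} gives $f\in\Pi(BS,BT(n))$, so Theorem~\ref{theor_v_general} yields $V_{BS}^f=f(0)e^{-rT}+\int_0^\infty f'(a)V_{BS}^D(a)\,\ud a$; the bond contribution $f(0)e^{-rT}=f(0)\E_Q[B_T^{-1}]$ is computed without error, so (absorbing it, or assuming $f(0)=0$) it suffices to show that $V_{t,n}^f$ of (\ref{smooth_trapet}) equals $\int_0^\infty f'V_{BS}^D\,\ud a+C/n+o(n^{-1})$ with $C=\int_0^\infty f'(a)C(a)\,\ud a$. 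Writing $a_k=kn^{\alpha-1}$, $g_n(a)=f'(a)V_{n,\lambda(a)}^D(a)$, $\bar g_n(a)=f'(a)\bigl(V_{BS}^D(a)+C(a)/n\bigr)$, and letting $I_{t,n}$ denote the trapezoidal operator on $[0,n^\alpha]$ with step $h=n^{\alpha-1}$, I would split
\[
V_{t,n}^f-\int_0^\infty f'V_{BS}^D\,\ud a
=\underbrace{I_{t,n}(g_n-\bar g_n)}_{R}
\;+\;\underbrace{\Bigl(I_{t,n}(\bar g_n)-\int_0^{n^\alpha}\bar g_n\,\ud a\Bigr)}_{\mathcal E_n}
\;+\;\underbrace{\Bigl(\int_0^{n^\alpha}\bar g_n\,\ud a-\int_0^\infty f'V_{BS}^D\,\ud a\Bigr)}_{T_n}.
\]

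For $T_n$: $V_{BS}^D(a)=e^{-rT}\Phi(d_2(a))$ and the explicit $C(a)$ of (\ref{smooth_const_function}) decay faster than any power of $a$ as $a\to\infty$, because $d_2(a)\sim-(\log a)/(\sigma\sqrt T)$ there and the factors $\Phi(d_2(a))$ and $e^{-d_2(a)^2/2}$ then obey $\exp(-c(\log a)^2)$ bounds; against the polynomial growth of $f'$ this gives $\int_{n^\alpha}^\infty|f'|\,(V_{BS}^D+|C|/n)\,\ud a=o(n^{-k})$ for every $k$. Hence $T_n=C/n+o(n^{-1})$, and the same estimate makes $\int_0^\infty f'V_{BS}^D\,\ud a$ and $\int_0^\infty f'C\,\ud a$ finite, so the constant $C$ of (\ref{smooth_constant}) is well defined. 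For $R$: by the choice of $\lambda_n(a_k)$ in (\ref{lambda_centered}) the strike $a_k$ is the geometric mean of two adjacent terminal stock prices, hence not a lattice point, and the corresponding $\Delta_n$ vanishes; Theorem~\ref{theor_rate_call}(1), in the sharpened $O(n^{-3/2})$ form of the Remark after it, then gives
\[
V_{n,\lambda(a_k)}^D(a_k)=V_{BS}^D(a_k)+\frac{C(a_k)}{n}+\rho_n(a_k),
\]
where the leading coefficient is the value of $\tfrac{e^{-rT}e^{-d_2^2/2}}{\sqrt{2\pi}}B_n$ at $\lambda=\lambda_n(a_k)$ — a direct (if tedious) computation from the Appendix~A formulas, using that $\lambda_n(\cdot)$ is uniformly bounded, identifies it with $C(a_k)$ of (\ref{smooth_const_function}) — and $|\rho_n(a)|\le n^{-3/2}\psi(a)$ uniformly in $a\in[0,n^\alpha]$ for a fixed rapidly decaying $\psi$ (this uses boundedness of $\lambda_n(\cdot)$ and the Gaussian factors carried by the error term in (\ref{error_digital_call}); alternatively one combines the pointwise $O(n^{-3/2})$ on compact sets with the uniform-in-$n$ moment bounds $\sup_n\E_{Q_n}[(S_T^n)^m]<\infty$, which hold for all $m$ since $\lambda_n\to0$). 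Consequently
\[
|R|\le h\sum_{k=0}^n|f'(a_k)|\,|\rho_n(a_k)|\le n^{-3/2}\,h\sum_{k=0}^n|f'(a_k)|\psi(a_k)=O(n^{-3/2})=o(n^{-1}),
\]
the last sum being bounded uniformly in $n$ (a Riemann sum for the convergent integral $\int_0^\infty|f'|\psi$).

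It remains to bound the quadrature error $\mathcal E_n$ for the \emph{smooth} function $\bar g_n$. Since $f\in C^3$ while $V_{BS}^D$ and $C$ are $C^\infty$ with all derivatives of rapid (super-polynomial) decay, $\bar g_n\in C^2[0,\infty)$ and $\sup_{a\ge0}|\bar g_n''(a)|\le M<\infty$ uniformly in $n$: each term of $\bar g_n''$ pairs a polynomially bounded $f^{(j)}$ with a rapidly decaying derivative of $V_{BS}^D+C/n$. The elementary composite-trapezoidal bound for a $C^2$ integrand on an interval of length $n^\alpha$ with step $n^{\alpha-1}$ then gives $|\mathcal E_n|\le\frac{n^\alpha(n^{\alpha-1})^2}{12}\,\sup_{[0,n^\alpha]}|\bar g_n''|\le\frac{M}{12}\,n^{3\alpha-2}$, which is $o(n^{-1})$ precisely because $\alpha<\tfrac13$, i.e.\ $3\alpha-2<-1$. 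Adding the three estimates, $V_{t,n}^f=\int_0^\infty f'V_{BS}^D\,\ud a+C/n+o(n^{-1})$, which together with the exact bond term is the asserted smooth expansion $V_{t,n}^f=V_{BS}^f+C/n+o(n^{-1})$, with $C$ as in (\ref{smooth_constant})--(\ref{smooth_const_function}).

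The delicate step is $R$: one must make the $O(n^{-3/2})$ remainder of (\ref{error_digital_call}) uniform over the $n$ quadrature nodes $a_k$, which fill the \emph{growing} interval $[0,n^\alpha]$ and each carry their own parameter $\lambda_n(a_k)$, and simultaneously keep $\bar g_n$ under $C^2$ control uniformly in $n$ on that interval — these are exactly the two facts that let the crude quadrature bound $n^{3\alpha-2}\sup|\bar g_n''|$ be absorbed into $o(n^{-1})$. The hypothesis $\alpha<\tfrac13$ is the threshold that makes this crude $C^2$ bound suffice when $f$ is only three times differentiable; for smoother $f$ one could instead invoke Euler--Maclaurin, cancel the $h^2$ boundary term, and relax the requirement toward $\alpha<\tfrac12$ (cf.\ Remark~\ref{rema_smooth_extension}).
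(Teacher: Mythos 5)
Your overall skeleton coincides with the paper's: use the centred choice (\ref{lambda_centered}) so that $\Delta_n(a_k)=0$ and the expansion (\ref{error_digital_call}) leaves only the $B_n/n$ term, use $\lvert\lambda_n(a)\rvert\le(\sigma\sqrt{Tn})^{-1}$ to replace that coefficient by $C(a)$ up to the remainder, bound the composite trapezoidal error on $[0,n^\alpha]$ by $n^{3\alpha-2}\sup\lvert\bar g_n''\rvert$ (this is exactly where $\alpha<\tfrac13$ enters, as in the paper via Lemma \ref{lma:error_trapet}), and discard the super-polynomially small tails. Your reorganisation around the smooth comparison function $\bar g_n=f'(V_{BS}^D+C/n)$ is in fact slightly tidier than the paper's treatment of its $I_2$, since $\lambda_n(\cdot)$ contains the integer $j_0$ and so $f'(a)B_{n,\lambda(a)}(a)$ is not a $C^2$ function of $a$.

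The genuine gap is in your term $R$, which you yourself flag as the delicate step. Theorem \ref{theor_rate_call} gives the remainder $O(n^{-3/2})$ for a \emph{fixed} strike $a$ (and a fixed bounded $\lambda_n$); it does not provide the uniform envelope $\lvert\rho_n(a)\rvert\le n^{-3/2}\psi(a)$ for all $a\in[0,n^\alpha]$ with $\psi$ rapidly decaying that you assert, and your justification ("Gaussian factors carried by the error term") is exactly the unproved point. The proposed fallback (pointwise $O(n^{-3/2})$ on compact sets plus uniform moment bounds) does not close the argument either: for a fixed compact $[0,A]$ the nodes with $a_k>A$ contribute to $R$ an amount that can be made small by taking $A$ large (Markov's inequality applied to $V^D_{n,\lambda(a)}(a)$ together with the uniform moment bounds), but that bound is independent of $n$ and hence not $o(n^{-1})$; to repair this one must let $A=A_n\to\infty$, and then one again needs to know how the constant in the $O(n^{-3/2})$ of Chang and Palmer depends on the strike over the growing region --- precisely the uniformity you postulated. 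The paper takes a different route at this point: it only needs the scaled remainder to be bounded uniformly in $a$ and $n$, and then kills the term (its $I_3/(n\sqrt n)$) by the H\"older/uniform-integrability argument from the proof of Theorem \ref{thm:exact_error}, using that $\int_0^\infty a^pV^D_{n,\lambda(a)}(a)\,\ud a\to\int_0^\infty a^pV^D_{BS}(a)\,\ud a$ for every $p>0$ (here $\kappa=\tfrac32$, $\beta=1$, so $\theta>2$ works). To make your proof complete, either carry out that H\"older/UI estimate for your Riemann sum, or verify from Chang and Palmer's proof that the remainder genuinely carries a strike-dependent Gaussian factor; as written, the key inequality $\lvert\rho_n(a)\rvert\le n^{-3/2}\psi(a)$ is an assertion, not a consequence of the results you quote.
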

\begin{exm}
As a non-trivial example, consider power option $f(x)=(x^n - K)^+$. In this case we have $f(x)=0$, if $x\leq K$. Hence we have to approximate integral over interval $(K,n^{\alpha})$ and on this set $f$ is smooth enough to apply Theorem \ref{thm:smooth}.
\end{exm}
\begin{rmk}
\label{rema_smooth_extension}
For simplicity we assumed that $f$ is continuous. However, if $f$ has jumps, then we simply add terms $\Delta_+f(s_k)V_{n,\lambda(s_k)}^D(s_k)$ and 
$\Delta_-f(s_k)V_{n,\lambda(s_k)}^D(s_k)$ corresponding to the jumps into our estimators $V_{q,n}^f$ and $V_{t,n}^f$. Similarly, if $f$ is only piecewise smooth   we simply compute integrals separately over different intervals. Note also that we only considered trapezoidal rule. Depending on the smoothness of $f$ however, we can apply any numerical integration method we wish. Hence, though the rate of convergence remains the same, one could use algorithms that are more sophisticated in order to decrease the error.
\end{rmk}
Our approximation (\ref{smooth_trapet}) is not binomial approximations. 
In both approximations we compute $n$ different binomial probabilities, but for every point $a_k$ the corresponding 
binomial probability is from different binomial distribution. However, from the implementation point of view our 
approximation is extremely easy to implement, as we only adjust $\lambda_n$ according to grid point $a_k$, compute the 
corresponding binomial probability and compute weighted sum of all points $a_k$. As a benefit, we obtain smooth convergence, 
and hence we may apply standard extrapolation techniques such as Richardson extrapolation. 
With same procedure we can improve the speed of convergence for continuous options $f$ with sufficient smoothness. 
In this case we can use formula \ref{v_general2} and approximate the integral 
$$
\int_0^\infty f''(a)V_{BS}^C(a)\ud a
$$
numerically. Then, if the number of steps $n$ is even, we can compute the call prices for every $a_k$ in the model proposed 
by Xiao \cite{Xiao} and obtain better rate. Similarly, if $n$ is odd, we can follow Joshi \cite{Joshi} to obtain faster convergence. 
For more details about improving the rate of convergence, the interested reader is referred to original papers.
\section{Conclusions}
\label{sec_conclusions}
From financial point of view, we obtain that the prices converge for a wide class of options. In particular, usually payoff 
functions are polynomially bounded. For such options, according to Theorem \ref{main_theorem}, prices converge for many 
distributions of interest of the underlying process $X$. For instance, our theorem is applicable for many models such as 
stochastic interest rate and volatility models, and jump-diffusion models or other exponential Levy models
(for detailed study of convergence of prices in Levy models, see \cite{Vostrikova}). More precisely, 
for polynomially bounded payoff functions $f$, the convergence of prices depends on how many moments of $X$ are finite. We also
emphasise that our result cover wide class of exotic options, but not Barrier options. However, 
in \cite{viitasaari} the author proved similar pricing formulas for Barrier type options, and hence our results could 
be formulated for these kind of options as well.

Our results can also be applied when considering the values of options at some time $t$. 
Another method for option valuation is to solve the corresponding PDE or PIDE. Evidently, 
solutions for these problems are rarely available in closed form and one has to use numerical methods such as finite difference methods. 
In such cases one has to consider different important concepts such as stability and accuracy. For example, some of the 
most simple difference methods for Black-Scholes PDE have error of order $O(\Delta t) + O(\Delta x^2)$. That is, the error 
is proportional to the time step and the square of the space step. 

According to Theorem \ref{thm:exact_error} it can be more reasonable in some situations to approximate the distribution function of the 
underlying process $X_t$ (or more precisely, the price of a digital option) with some sequence of processes $\{X_t^n\}_{n\geq 1}$ 
such that the value of the option $f(X_T^n)$ can be computed, exactly or with good accuracy, in the approximating model. 
In particular, this can be extremely useful if one can obtain sufficiently good rate for conditional distribution function. 
As a result one can obtain fast algorithms for option valuation by finding approximations such that digital option prices 
convergence sufficiently fast, and on approximating model one can find value for option $f(X_T^n)$ exactly or with a proper 
algorithm. A derivation of such approximations for different processes 
$X_t$ and models $(\Omega,\mathcal{F}, B, S, (\mathcal{F}_t),\P)$ would be an interesting subject of further research. 

Note also that as difference methods solve equation recursively, the error with respect to time discretisation can cumulate. 
Using our method the error in time does not cumulate if the prices for digital options can be valued for every $t$. 
We also emphasise that using our method the amount of computer work is not necessarily larger than using methods for solving 
PDEs. For example, for finite difference method one usually needs to solve $n$-dimensional system of linear equations 
for every time step and this has to be done separately for every option $f$. Compared to our method, we need to compute a 
product of known matrix and a vector that arise from numerical integration and discretisation in space. Although, our 
matrix is full while the matrices arising from difference methods are sparse and thus admit specifically adapted algorithms 
for solving the linear equation system. In our method, the 
computation of value in the 
approximating model should be computationally easy.
Note also that once one has access to digital prices, our method is extremely easy to implement compared to many 
sophisticated finite difference algorithms.

\section{Proofs}
\label{sec_proofs}
We begin with an auxiliary lemma.
\begin{lma}
\label{lma:perus}
Let $Y\in L^1$. Then for every $\epsilon>0$ there exists $\delta>0$ such that
$\P(A)<\delta\Rightarrow \E[Y\textbf{1}_A]<\epsilon$.
\end{lma}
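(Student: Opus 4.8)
The statement to establish is the classical absolute continuity of the Lebesgue integral: an $L^1$ random variable has arbitrarily small integral over sets of sufficiently small probability. The plan is to split $Y$ into a bounded part and a small tail, controlling the first by the probability of $A$ and the second uniformly in $A$.

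First I would reduce to $|Y|$, since $\lvert\E[Y\textbf{1}_A]\rvert \le \E[\lvert Y\rvert \textbf{1}_A]$, so it suffices to prove the claim for the nonnegative integrable random variable $\lvert Y\rvert$. Next, for $M>0$ write $\lvert Y\rvert = \lvert Y\rvert\textbf{1}_{\lvert Y\rvert\le M} + \lvert Y\rvert\textbf{1}_{\lvert Y\rvert> M}$. Since $\lvert Y\rvert\in L^1$, the truncated variables $\lvert Y\rvert\textbf{1}_{\lvert Y\rvert\le M}$ increase to $\lvert Y\rvert$ as $M\to\infty$, so by monotone (or dominated) convergence $\E[\lvert Y\rvert\textbf{1}_{\lvert Y\rvert> M}]\to 0$. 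Hence, given $\epsilon>0$, I can fix $M$ with $\E[\lvert Y\rvert\textbf{1}_{\lvert Y\rvert> M}]<\frac{\epsilon}{2}$.

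Then for any event $A$ I estimate
\begin{equation}
\E[\lvert Y\rvert\textbf{1}_A] = \E[\lvert Y\rvert\textbf{1}_A\textbf{1}_{\lvert Y\rvert\le M}] + \E[\lvert Y\rvert\textbf{1}_A\textbf{1}_{\lvert Y\rvert> M}] \le M\,\P(A) + \E[\lvert Y\rvert\textbf{1}_{\lvert Y\rvert> M}] < M\,\P(A) + \frac{\epsilon}{2}.
\end{equation}
Choosing $\delta = \frac{\epsilon}{2M}$, any $A$ with $\P(A)<\delta$ satisfies $\E[\lvert Y\rvert\textbf{1}_A] < \frac{\epsilon}{2} + \frac{\epsilon}{2} = \epsilon$, and therefore $\lvert\E[Y\textbf{1}_A]\rvert<\epsilon$, which is the claim.

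There is no serious obstacle here; the only point requiring care is the convergence $\E[\lvert Y\rvert\textbf{1}_{\lvert Y\rvert> M}]\to 0$, which is precisely where integrability of $Y$ is used, and which follows from the dominated convergence theorem with dominating function $\lvert Y\rvert$. (Alternatively one could argue by contradiction using Borel--Cantelli, but the truncation argument is shorter and more transparent.)
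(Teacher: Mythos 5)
Your proof is correct: the truncation into $\lvert Y\rvert\textbf{1}_{\lvert Y\rvert\le M}$ plus a tail controlled by dominated convergence, followed by the choice $\delta=\epsilon/(2M)$, is the standard argument for absolute continuity of the integral, and it even yields the slightly stronger bound $\lvert\E[Y\textbf{1}_A]\rvert<\epsilon$. The paper states this lemma without proof, treating it as a classical fact, and your argument is exactly the intended one.
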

\begin{proof}[Proof of Theorem \ref{main_theorem}]
Without loss of generality we can assume that $f$ is continuous and $f(0)=0$, since the amount of jumps is finite and 
convergence 
$$
f(0)\E_{Q_n}[(B_T^n)^{-1}] \rightarrow f(0)\E_Q[B_T^{-1}]
$$
follows from (\ref{bond_convergence}).\\
$(2)\Rightarrow(3)$: By (\ref{v_general2}) and $(2)$ we have
\begin{equation*}
\begin{split}
&\lvert V_{n,Q_n}^f - V_Q^f \rvert \\
&= \lvert \int_0^\infty f'(a)(V_{n,Q_n}^D(a)-V_Q^D(a))\ud a\rvert \\
&\leq C\tilde{g}(n)
\end{split}
\end{equation*}
as $n$ tends to infinity. Hence we have $(3)$.\\
$(3)\Rightarrow(2)$: Now we have
\begin{equation*}
\begin{split}
&\lvert V_{n,Q_n}^f - V_Q^f \rvert \\
&= \lvert \int_0^\infty f'(a)(V_{n,Q_n}^D(a)-V_Q^D(a))\ud a\rvert \\
&\leq \tilde{g}(n)\sup_n \lvert \int_0^\infty f'(a)(V_{n,Q_n}^D(a)-V_Q^D(a))\tilde{g}(n)^{-1}\ud a\rvert.
\end{split}
\end{equation*}
Hence we have $(2)$.\\
$(2)\Rightarrow(1)$: Let $\epsilon>0$ be fixed. Assume that $X_T^n$ is not 
$(B^n,f)$-UI and we have $(2)$. From $(2)$ we conclude that there exists $N$ such that
$$
\lvert\E_{Q_n}[(B_T^n)^{-1}f(X_T^n)\textbf{1}_{X_T^n>a}] - \E_Q[B_T^{-1}f(X)\textbf{1}_{X_T>a}]\rvert < \frac{\epsilon}{2},
$$
when $n\geq N$. By Lemma \ref{lma:perus} we can take $a$ large enough such that 
$$
\lvert\E_Q[B_T^{-1}f(X_T)\textbf{1}_{X_T>a}]\rvert<\frac{\epsilon}{2}
$$ 
which implies that we have 
$$
\lvert\E_{Q_n}[(B_T^n)^{-1}f(X_T^n)\textbf{1}_{X_T^n>a}]\rvert<\epsilon.
$$ 
Now $\epsilon$ is arbitrary, which 
implies that for every $\epsilon>0$ we can find a number $a$ independent of $n$ such that 
$\lvert\E_{Q_n}[(B_T^n)^{-1}f(X_T^n)\textbf{1}_{X_T^n>a}]\rvert < \epsilon$. Hence we have a contradiction.\\
$(1)\Rightarrow(2)$: It is evident that for every continuity point $a$ of $V_Q^D(a)$ we have
$$
\E_{Q_n}[(B_T^n)^{-1}f(X_T^n)\textbf{1}_{X_T^n\leq a}] \rightarrow \E_Q[B_T^{-1}f(X_T)\textbf{1}_{X_T\leq a}].
$$
Let now $\epsilon>0$ be fixed. By $(1)$ and Lemma \ref{lma:perus} we can take a continuity point $a$ of $V_Q^D(a)$ such that
$$
\sup_n \lvert\E_{Q_n}[(B_T^n)^{-1}f(X_T^n)\textbf{1}_{X_T^n>a}]\rvert < \frac{\epsilon}{3}
$$
and
$$
\lvert\E_Q[B_T^{-1}f(X_T)\textbf{1}_{X_T>a}]\rvert < \frac{\epsilon}{3}.
$$
Finally, take $N$ such that 
$$
\lvert\E_{Q_n}[(B_T^n)^{-1}f(X_T^n)\textbf{1}_{X_T^n\leq a}] - \E_Q[B_T^{-1}f(X)\textbf{1}_{X_T\leq a}]\rvert < \frac{\epsilon}{3}
$$
for every $n\geq N$. This implies that $V_{n,Q_n}^f\rightarrow V_Q^f$ i.e. we have $(2)$.
\end{proof}
\begin{proof}[Proof of Corollary \ref{main_theorem2}]
Evidently, we have (\ref{sup_sup_condition}) $\Rightarrow$ (\ref{sup_L1_function}). Moreover, 
(\ref{sup_L1_function}) $\Rightarrow$ (\ref{integral_eq_cond}) which completes the proof.
\end{proof}
\begin{proof}[Proof of Theorem \ref{thm:exact_error}]
Now we have 
\begin{equation}
\label{apu_esitys}
V_{n,Q_n}^D(a)=V_Q^D(a) +\frac{A_n(a)}
{n^\beta}+\frac{O_n(a)}{n^\kappa}
\end{equation}
for some function $O_n(a)$ which is bounded in $n$. 
Thus, by Theorem \ref{theor_v_general} and condition (\ref{exact_error_bounded_coefficient}) we only have to show that 
$$
\int_0^\infty f'(a)\frac{O_n(a)}{n^\kappa}\ud a = o(n^{-\beta}).
$$
Note that $O^*=:\sup_a\sup_n \lvert O_n(a)\rvert <\infty$. We have
\begin{equation*}
\begin{split}
&\frac{1}{n^{\kappa}}\int_0^\infty \lvert f'(a)O_n(a)\rvert \ud a\\
&=\frac{1}{n^{\kappa}}\int_0^{1} \lvert f'(a)O_n(a)\rvert \ud a\\
&+\frac{1}{n^{\kappa}}\int_{1}^\infty \lvert f'(a)O_n(a)\rvert \ud a\\
&\leq \frac{O^*\sup_{0\leq a\leq 1}\lvert f'(a)\rvert }{n^{\kappa}}\\
&+\frac{1}{n^{\kappa}}\int_{1}^\infty \lvert f'(a)O_n(a)\rvert \ud a.
\end{split}
\end{equation*}
Evidently, the first term is $o(n^{-\beta})$. For the second term, we obtain by H\"{o}lder inequality that 
\begin{equation*}
\begin{split}
&\frac{1}{n^{\kappa}}\int_{1}^\infty \lvert f'(a)O_n(a)\rvert \ud a\\
&\leq \left[\int_{1}^\infty \frac{1}{a^{\delta\frac{1+\theta}{\theta}}}\ud a\right]^{\frac{\theta}{1+\theta}}\left[\int_{1}^\infty\lvert a^\delta f'(a)\rvert ^{1+\theta}\frac{\lvert O_n(a)\rvert ^{1+\theta}}{n^{\kappa(1+\theta)}}\ud a\right]^{\frac{1}{1+\theta}}\\
&\leq c(\theta,\delta)\frac{\left(O^*\right)^{\frac{\theta}{1+\theta}}}{n^{\tilde{\kappa}}}\left[\sup_n\int_0^\infty\lvert a^\delta f'(a)\rvert ^{1+\theta}\frac{\lvert O_n(a)\rvert }{n^\kappa}\ud a\right]^{\frac{1}{1+\theta}}
\end{split}
\end{equation*}
where
$\tilde{\kappa} = \frac{\kappa\theta}{\theta + 1} > \beta$ by (\ref{cond_eps_delta}).
It remains to show that
$$
\sup_n\int_0^\infty\lvert a^\delta f'(a)\rvert ^{1+\theta}\frac{\lvert O_n(a)\rvert }{n^\kappa}\ud a < \infty.
$$
By (\ref{apu_esitys}), (\ref{finite_limit_price}) and (\ref{bounded_coef2}) this is equivalent to condition
$$
\sup_n \int_0^\infty\lvert a^\delta f'(a)\rvert ^{1+\theta}V_{n,Q_n}^D(a)\ud a < \infty.
$$
This follows from assumption that $X_T^n$ is $(B^n,F_{\delta,\theta})$-UI and hence we are done.
\end{proof}
Before proving the results related to binomial tree approximations, we start with a simplifying lemma.
\begin{lma}
\label{lemma_proof}
Let $f\in\Pi$ such that $f'$ is polynomially bounded. Then
\begin{equation}
\label{boundedness_n}
\int_0^\infty \lvert af'(a)\rvert^{p}\lvert e^{-\frac{d_2^2}{2}}d_1^3\rvert\ud a < \infty
\end{equation}
for every $p\geq 1$.
\end{lma}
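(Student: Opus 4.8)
The plan is to reduce the claimed bound to an estimate on the growth of $d_1$ and $d_2$ in terms of $a$, and then exploit the polynomial bound on $f'$ together with the Gaussian factor $e^{-d_2^2/2}$. First I would recall that $d_1$ and $d_2$, as defined in the Black–Scholes formula (see Appendix A), are of the form
\begin{equation*}
d_{1,2} = \frac{\log(S_0/a) + (r \pm \tfrac{1}{2}\sigma^2)T}{\sigma\sqrt{T}},
\end{equation*}
so that $d_2 = -c_1 \log a + c_2$ and $d_1 = d_2 + \sigma\sqrt{T}$ for suitable constants $c_1 = 1/(\sigma\sqrt{T}) > 0$ and $c_2$ depending only on $S_0,r,\sigma,T$. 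In particular $|d_1|^3$ grows only like $|\log a|^3$, which is dominated by any positive power of $a$ near infinity and by any negative power of $a$ near zero.

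Next I would split the integral in \eqref{boundedness_n} at $a = 1$. On $[1,\infty)$ the key point is that $d_2 \to -\infty$ like $-c_1\log a$, so $e^{-d_2^2/2} = e^{-\frac12(c_1\log a - c_2)^2}$ decays faster than any negative power of $a$; combined with the polynomial bound $|f'(a)| \le c_1' + c_2' a^p$ (hence $|af'(a)|^p$ is polynomially bounded) and the merely logarithmic growth of $|d_1|^3$, the integrand is bounded by $e^{-\frac12(c_1\log a)^2}$ times a polynomial-times-logarithm factor, which is integrable on $[1,\infty)$. On $(0,1]$ one has $d_2 \to +\infty$ like $-c_1\log a = c_1|\log a|$ as $a\to 0^+$, so again $e^{-d_2^2/2}$ decays super-polynomially as $a\to 0^+$; since $|af'(a)|^p \le (a(c_1' + c_2' a^p))^p$ stays bounded as $a\to 0^+$ and $|d_1|^3$ grows only logarithmically, the integrand is bounded near $0$ and in fact decays, so it is integrable on $(0,1]$. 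Adding the two pieces gives the finiteness claimed.

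The main obstacle — really the only thing requiring care — is making the Gaussian-beats-polynomial-and-log estimate uniform and clean: one wants a single elementary inequality of the type $e^{-\frac12(c_1|\log a| - |c_2|)^2}(1+|\log a|)^3(1+a^{p})|a|^p \le C_\gamma \, e^{-\gamma(\log a)^2}$ valid for all $a>0$ with some $\gamma>0$, from which integrability over $(0,\infty)$ is immediate by the substitution $a = e^t$. I would state this as the substance of the proof: after $a = e^t$, $\ud a = e^t\,\ud t$, and the integral becomes $\int_{-\infty}^\infty e^{-\frac12(c_1 t - c_2)^2}\,(\text{polynomial in } t)\,(\text{polynomial in } e^t)\,\ud t$, which converges because the quadratic exponent $-\frac12 c_1^2 t^2$ dominates every term of the form $t^k e^{mt}$. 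Everything else is routine bookkeeping with the definitions in Appendix A.
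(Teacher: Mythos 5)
Your argument is correct: since $d_1$ and $d_2$ are affine functions of $\log a$, the substitution $a=e^t$ turns the integrand into a Gaussian factor $e^{-\frac12(c_1t-c_2)^2}$ multiplied by terms of the form $t^k e^{mt}$ (coming from $d_1^3$, the polynomial bound on $f'$, the factor $a^p$ and the Jacobian), and the quadratic exponent dominates, giving convergence on all of $(0,\infty)$. The paper actually states Lemma \ref{lemma_proof} without any proof, treating it as routine, so there is no argument of the paper to compare with; your write-up supplies precisely the elementary verification that is omitted. One cosmetic point: you reuse the letter $p$ both for the exponent in \eqref{boundedness_n} and for the growth exponent in the polynomial bound on $f'$; these are independent, so give the latter its own name (as in Lemma \ref{lma:ok_pricing}) to keep the bookkeeping clean.
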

\begin{proof}[Proof of Lemma \ref{lma:ok_pricing}]
Evidently we have 
$$
f\in\bigcap_n \Pi_{BT(n)}(S_T^n),
$$
where $BT(n)$ denotes binomial tree with $n$ number of steps. 
Hence we only have to prove that $f\in\Pi_{BS}(S_T)$, where $BS$ denotes Black-Scholes model. In order to have this it is sufficient 
to show that
\begin{equation}
\label{BS-ok}
\lvert\int_0^\infty f'(a)V_{BS}^D(a)\ud a \rvert < \infty,
\end{equation}
where $V_{BS}^D(a) = e^{-rT}\Phi(d_2)$. Note that
for every number $x>0$, we have
$$
1 - \Phi(x)\leq \frac{1}{\sqrt{2\pi}x}e^{-\frac{x^2}{2}}.
$$
Hence for sufficiently large $a$, we have
\begin{equation}
\label{upper_bound3}
\Phi(d_2)\leq \frac{1}{\sqrt{2\pi}\lvert d_2\rvert }e^{-\frac{d_2^2}{2}}.
\end{equation}
The statement follows from Lemma \ref{lemma_proof}.
\end{proof}
\begin{proof}[Proof of Theorem \ref{theor_rate_general}]
For sufficiently large $a$, we have
\begin{equation}
\label{upper_bound1}
e^{-\frac{d_2^2}{2}}\left\lvert B_n - \frac{d_2\Delta_n}{2}\right\rvert  \leq ce^{-\frac{d_2^2}{2}}\lvert d_1\rvert ^3,
\end{equation}
and
\begin{equation}
\label{upper_bound2}
e^{-\frac{d_2^2}{2}}\left\lvert \Delta_n\right\rvert  \leq ce^{-\frac{d_2^2}{2}}\lvert d_1\rvert ^3
\end{equation}
for some constant $c$. Now the result follows from Theorem \ref{thm:exact_error}, Lemma \ref{lma:ok_pricing}, 
Theorem \ref{theor_rate_call} and Remark \ref{rate_rema}. 
\end{proof}
\begin{proof}[Proof of Corollary \ref{theor_rate_c2}]
The proof is essentially the same as the proof of Theorem \ref{theor_rate_general} and the details are left to the reader.
\end{proof}
The proof for smooth convergence is based on the following well known auxiliary result (for instance, see \cite{atkinson}).
\begin{lma}
\label{lma:error_trapet}
Let $I_{t,n}$ denote the trapezoidal approximation with $n$ equidistant points for integral
$$
\int_0^b g(a)\ud a
$$
given by (\ref{trapezoid_appro}). If $g$ is two times continuously differentiable on $[0,b]$, then the error for trapezoidal 
approximation satisfy
$$
\left\lvert I_{t,n} - \int_0^b g(a)\ud a \right\rvert \leq \frac{b^3}{12n^2}\sup_{x\in[0,b]}\lvert g''(x)\rvert .
$$
\end{lma}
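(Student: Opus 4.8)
The plan is to prove this classical composite trapezoidal estimate by reducing the global error to a sum of single-subinterval errors and then controlling each of those through the Peano kernel. Write $h = b/n$ and $x_k = kh$ for $k=0,\ldots,n$, so that the nodes partition $[0,b]$ into $n$ subintervals of equal length $h$. By definition (\ref{trapezoid_appro}) the approximation splits as a sum of local trapezoids,
\begin{equation*}
I_{t,n} = \sum_{k=0}^{n-1} T_k, \qquad T_k = \frac{h}{2}\left(g(x_k)+g(x_{k+1})\right),
\end{equation*}
and hence the total error is the sum of the local errors
\begin{equation*}
I_{t,n} - \int_0^b g(a)\ud a = \sum_{k=0}^{n-1} E_k, \qquad E_k = T_k - \int_{x_k}^{x_{k+1}} g(a)\ud a.
\end{equation*}
It therefore suffices to bound each $E_k$.

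The main step is the single-interval error formula. After the translation $a\mapsto x_k + t$ it is enough to analyse, on $[0,h]$, the functional
\begin{equation*}
E(g) = \frac{h}{2}\left(g(0)+g(h)\right) - \int_0^h g(t)\ud t.
\end{equation*}
Integrating by parts twice against the quadratic weight $K(t) = \tfrac12 t(t-h)$ (equivalently, invoking the Peano kernel theorem, since $E$ annihilates affine functions) yields the representation
\begin{equation*}
E(g) = -\int_0^h K(t)\, g''(t)\ud t.
\end{equation*}
Since $K(t)\le 0$ throughout $[0,h]$, the kernel has constant sign, so the mean value theorem for integrals produces a point $\xi_k\in(x_k,x_{k+1})$ with
\begin{equation*}
E_k = -g''(\xi_k)\int_0^h K(t)\ud t = \frac{h^3}{12}\, g''(\xi_k),
\end{equation*}
where the value $\int_0^h K(t)\ud t = -h^3/12$ is obtained by an elementary integration.

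It remains to assemble the pieces. Taking absolute values, bounding each $\lvert g''(\xi_k)\rvert$ by $M:=\sup_{x\in[0,b]}\lvert g''(x)\rvert$, and using that there are exactly $n$ subintervals gives
\begin{equation*}
\left\lvert I_{t,n} - \int_0^b g(a)\ud a\right\rvert \le \sum_{k=0}^{n-1}\lvert E_k\rvert \le \frac{h^3}{12}\, n\, M = \frac{b^3}{12 n^2}\, M,
\end{equation*}
since $n h^3 = n (b/n)^3 = b^3/n^2$. This is the claimed bound. The only genuinely nontrivial ingredient is the single-subinterval formula $E_k = \tfrac{h^3}{12} g''(\xi_k)$; everything else is summation and the elementary evaluation of $\int_0^h K$. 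I expect the Peano-kernel (equivalently, double integration-by-parts) derivation of that local formula to be the one place requiring care, in particular the verification that the kernel $K$ does not change sign on $[0,h]$, which is exactly what licenses the application of the mean value theorem for integrals.
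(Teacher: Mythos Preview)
Your proof is correct and is the standard Peano-kernel (equivalently, double integration-by-parts) derivation of the composite trapezoidal error bound. The paper, however, does not supply its own proof of this lemma: it is stated as a well-known auxiliary result with a reference to a numerical analysis textbook, so there is nothing to compare against beyond noting that your argument is precisely the kind of proof one would find in such a reference.
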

\begin{proof}[Theorem \ref{thm:smooth}]
For every fixed $a$ we have (see \cite{Palmer})
$$
\lvert\lambda_n(a)\rvert \leq \frac{1}{\sigma\sqrt{Tn}}.
$$
Hence, since $f$ is polynomially bounded, we have
\begin{equation}
\label{apu_integraali}
\int_0^\infty f'(a)B_{n,\lambda(a)}(a)\ud a = C + O\left(\frac{1}{\sqrt{n}}\right),
\end{equation}
where $B_{n,\lambda(a)}(a)$ is given in Appendix A with $\lambda_n(a)$ given by (\ref{lambda_centered}) and $C$ is given by 
(\ref{smooth_constant}).
From Theorem \ref{theor_rate_call} we obtain that
\begin{equation*}
\begin{split}
V_{t,n}^f &= \frac{n^{\alpha-1}}{2}\sum_{k=0}^{n-1}\left(f'(kn^{\alpha-1})V_{BS}^D(kn^{\alpha-1})+ f'((k+1)n^{\alpha-1})
V_{BS}^D(kn^{\alpha-1})\right)\\
&+\frac{1}{n}\cdot\frac{n^{\alpha-1}}{2}\sum_{k=0}^{n-1}\left(f'(kn^{\alpha-1})B_{n,\lambda(a)}(kn^{\alpha-1})+ f'((k+1)n^{\alpha-1})
B_{n,\lambda(a)}(kn^{\alpha-1})\right)\\
&+ \frac{1}{n\sqrt{n}}\cdot\frac{n^{\alpha-1}}{2}\sum_{k=0}^{n-1}\left(f'(kn^{\alpha-1})O_n(kn^{\alpha-1})+ f'((k+1)n^{\alpha-1})
O_n(kn^{\alpha-1})\right)\\
&=: I_1 + \frac{I_2}{n} + \frac{I_3}{n\sqrt{n}},
\end{split}
\end{equation*}
where $I_1$ and $I_2$ are trapezoidal approximations for integrals
$$
\int_0^{n^{\alpha}} f'(a)V_{BS}^D(a)\ud a
$$
and
$$
\int_0^{n^{\alpha}} f'(a)B_{n,\lambda(a)}(a)\ud a.
$$
Since $\alpha<\frac{1}{3}$, we obtain by (\ref{apu_integraali}) and Lemma \ref{lma:error_trapet} that 
$$
I_1 - \int_0^{n^{\alpha}} f'(a)V_{BS}^D(a)\ud a = o(n^{-1})
$$
and
$$
\frac{I_2}{n} - \frac{1}{n}\int_0^{n^{\alpha}} f'(a)C(a)\ud a = o(n^{-1}),
$$
where $C(a)$ is given by (\ref{smooth_const_function}). 
Hence we have
$$
V_{t,n}^f = V_{BS}^f + \frac{C}{n} - \int_{n^\alpha}^\infty f'(a)V_{BS}^D(a)\ud a 
- \frac{1}{n}\int_{n^\alpha}^\infty f'(a)C(a)\ud a + \frac{I_3}{n\sqrt{n}} + o(n^{-1}).
$$
It is straightforward to see that 
$$
\int_{n^\alpha}^\infty f'(a)V_{BS}^D(a)\ud a = o(n^{-1})
$$
and
$$
\int_{n^\alpha}^\infty f'(a)C(a)\ud a = o(n^{-1}).
$$
Hence we only have to show that $\frac{I_3}{n\sqrt{n}} = o(n^{-1})$ in order to complete the proof. 
This follows by arguments in the proof of Theorem \ref{thm:exact_error}, since $f'$ is polynomially bounded and we have
$$
\int_0^\infty a^pV_{n,\lambda(a)}^D(a)\ud a \rightarrow \int_0^\infty a^p V_{BS}^D(a)\ud a
$$
for every $p>0$. 
\end{proof}

\appendix
\section{List of constants for binomial error expansion}
\begin{equation*}
\begin{split}
u &= e^{\sigma\sqrt{\Delta t}+\lambda_n\Delta t},\quad d = e^{-\sigma\sqrt{\Delta t}+\lambda_n\Delta t}\\
d_1 &= \frac{\log\frac{S_0}{a} + \left(r+\frac{\sigma^2}{2}\right)T}{\sigma\sqrt{T}}, \quad d_2 = d_1 - \sigma\sqrt{T},\\
\Delta_n &= 1 - 2 frac\left[\frac{\log\frac{S_0}{a}+n\log d}{\log\frac{u}{d}}\right],\\
B_n &= \frac{d_1^3+d_1d_2^2+2d_2-4d_1}{24} +
\frac{(2-d_1d_2-d_1^2)\sqrt{T}}{6\sigma}(r-\lambda_n\sigma^2)\\
&\quad+\frac{Td_1}{2\sigma^2}(r-\lambda_n\sigma^2)^2,\\
\tilde{B}_n &= -\sigma^2T(6+d_1^2 +d_2^2) +
4T(d_1^2-d_2^2)(r-\lambda_n\sigma^2)-12T^2(r-\lambda_n\sigma^2)^2,\\
J_n &= \frac{e^{-rT}e^{-\frac{d_1^2}{2}}}{\sqrt{2\pi}}\left[\frac{\Delta_n}{\sqrt{n}} -
\frac{d_2\Delta_n^2}{2n}+\frac{B_n}{n}\right],\\
\widehat{J}_n &=
\frac{e^{-rT}e^{-\frac{d_1^2}{2}}}{\sqrt{2\pi}}\left[\frac{\Delta_n-2}{\sqrt{n}} -
\frac{d_2(\Delta_n-2)^2}{2n}+\frac{B_n}{n}\right],\\
H_n &= \frac{S_0e^{-\frac{d_1^2}{2}}}{24\sigma\sqrt{2\pi T}} \left[\tilde{B}_n - 12\sigma^2T(\Delta_n^2 - 1)\right].
\end{split}
\end{equation*}

\bibliographystyle{plain}
\bibliography{bibli_rate}

\begin{thebibliography}{10}

\bibitem{atkinson}
K.~E. Atkinson.
\newblock {\em An Introduction to Numerical Analysis}.
\newblock John Wiley \& Sons, 1989.

\bibitem{Vostrikova}
S.~Cawston and L.~Vostrikova.
\newblock Continuity property of option prices in incomplete markets.
\newblock {\em Theory of Probability and their applications}, accepted:arXiv:
  0903.3274, 2012.

\bibitem{Palmer}
L.~Chang and K.~Palmer.
\newblock Smooth convergence in the binomial model.
\newblock {\em Finance and Stochastics}, 11:91--105, 2007.

\bibitem{Cox}
J.~Cox, S.A. Ross, and M.~Rubinstein.
\newblock Option pricing: a simplified approach.
\newblock {\em J. Financ. Econ.}, 7:229--263, 1979.

\bibitem{Diener}
F.~Diener and M.~Diener.
\newblock Asymptotics of the price oscillations of a european call option in a
  tree model.
\newblock {\em Mathematical Finance}, 14:271--293, 2004.

\bibitem{Heston}
S.~Heston and G.~Zhou.
\newblock On the rate of convergence of discrete-time contingent claims.
\newblock {\em Mathematical Finance}, 10:53--75, 2000.

\bibitem{Hsia}
C.-C. Hsia.
\newblock On binomial option pricing.
\newblock {\em J. Financ. Res.}, 6:41--50, 1983.

\bibitem{jacod}
J.~Jacod and P.~Protter.
\newblock {\em Probability Essentials}.
\newblock Springer, 2004.

\bibitem{Jarrow}
M.J. Jarrow.
\newblock A characterization theorem for unique risk neutral probability
  measures.
\newblock {\em Economic Letters}, 22:61--65, 1986.

\bibitem{Joshi}
M.~Joshi.
\newblock Achieving higher order convergence for the prices of european options
  in binomial trees.
\newblock {\em Mathematical Finance}, 20:89--103, 2010.

\bibitem{Leisen}
D.~Leisen and M.~Reimer.
\newblock Binomial models for option valuation -- examining and improving
  convergence.
\newblock {\em Applied Mathematical Finance}, 3:319--346, 1996.

\bibitem{lasse}
L.~Leskel\"{a} and M.~Vihola.
\newblock Stochastic order characterization of uniform integrability and
  tightness.
\newblock {\em Statistics \& Probability Letters}, doi:
  10.1016/j.spl.2012.09.023, 2012.

\bibitem{Rendleman}
R.~Rendleman and B.~Bartter.
\newblock Two state option pricing.
\newblock {\em J. Financ.}, 34:1093--1110, 1979.

\bibitem{Tian}
Y.~Tian.
\newblock A modified lattice approach to option pricing.
\newblock {\em Journal of Futures Markets}, 13:563--577, 1993.

\bibitem{viitasaari}
L.~Viitasaari.
\newblock Option prices with call prices.
\newblock {\em submitted}, arXiv: 1207.6205, 2012.

\bibitem{Walsh}
J.~Walsh.
\newblock The rate of convergence of the binomial tree scheme.
\newblock {\em Finance and Stochastics}, 7:337--361, 2003.

\bibitem{Xiao}
X.~Xiao.
\newblock Improving speed of convergence for the prices of european options in
  binomial trees with even numbers of steps.
\newblock {\em App. Math. and Computation}, 216:2659--2670, 2010.

\end{thebibliography}

\end{document}